\newcommand{\stepsize}{\rho}
\renewcommand{\proj}[2]{{\mathop{\mathsf{P}}}_{#1} \parentheses*{#2}}
\renewcommand{\indcfunc}[2]{\mathsf{I}_{#1}(#2)}
\newcommand{\simplex}{\Delta}
\title{A Fast and Accurate Splitting Method for Optimal Transport: Analysis and Implementation}
\author{
	Vien V.~Mai \\ {\small \texttt{ maivv@kth.se} }
    \and Jacob Lindbäck \\ {\small \texttt{jlindbac@kth.se}}
    \and Mikael Johansson \\ {\small \texttt{mikaelj@kth.se}}
}
\date{ \large KTH Royal Institute of Technology \\[1em] \today }
\begin{document}
\maketitle

\begin{abstract}
We develop a fast and reliable method for solving large-scale optimal transport (OT) problems at an unprecedented combination of speed and accuracy. Built on the celebrated Douglas-Rachford splitting technique, our method tackles the original OT problem directly instead of solving an approximate regularized problem, as many state-of-the-art techniques do. This allows us to provide sparse transport plans and avoid numerical issues of methods that use entropic regularization. The algorithm has the same cost per iteration as the popular Sinkhorn method, and each iteration can be executed efficiently, in parallel. The proposed method enjoys an iteration complexity $O(1/\epsilon)$ compared to the best-known $O(1/\epsilon^2)$ of the Sinkhorn method. In addition, we establish a linear convergence rate for our formulation of the OT problem. We detail an efficient GPU implementation of the proposed method that maintains a primal-dual stopping criterion at no extra cost. 
Substantial experiments demonstrate the effectiveness of our method, both in terms of computation times and robustness.
\end{abstract}

\section{Introduction}
We study the discrete optimal transport (OT) problem:
\begin{align}\label{eq:ot:dist}
    \begin{array}{ll}
        \underset{X \geq 0}{\minimize} & \InP{C}{X}\\
        \subjectto & X \ones_n = p \\
                   & X^\top \ones_m = q,
    \end{array}
\end{align}
where $X \in \R_{+}^{m\times n}$ is the transport plan, $C\in \R_{+}^{m\times n}$ is the cost matrix, and $p \in \R_{+}^m$ and $q\in \R_{+}^n$ are two discrete probability measures. OT has a very rich history in mathematics and operations research dated back to at least the 18th century. By exploiting geometric properties of the underlying ground space, OT provides a powerful and flexible way to compare probability measures. It has quickly become a central topic in machine learning and has found countless applications in tasks such as deep generative models \citep{ACB17}, domain adaptation \citep{CFT16}, and inference of high-dimensional cell trajectories in genomics \citep{SST19}; we refer to \cite{PC19} for a more comprehensive survey of OT theory and applications. However, the power of OT comes at the price of an enormous computational cost for determining the optimal transport plan. Standard methods for solving linear programs (LPs) suffer from a super-linear time complexity in terms of the problem size \cite{PC19}. Such methods are also challenging to parallelize on modern processing hardware. Therefore, there has been substantial research in developing new efficient methods for OT.  This paper advances the-state-of-the-art in this direction.
\subsection{Related work}
Below we review some of the topics most closely related to our work.

\paragraph{Sinkhorn method} The Sinkhorn (SK) method~\citep{Cur13} aims to solve an approximation of \eqref{eq:ot:dist}  in which the objective is replaced by  a regularized version of the  form$\InP{C}{X} - \eta H(X)$. Here,  $H(X) = - \sum_{ij}X_{ij} \log (X_{ij})$ is an entropy function and $\eta>0$ is a regularization parameter. The Sinkhorn method defines the quantity $ K = \exp({-C/\eta})$ and repeats the following steps
\begin{align*}
    u_{k} = {p}/(K v_{k-1})
    \quad \mbox{and} \quad
    v_{k} = {q}/(K^\top u_{k}),
\end{align*}
until  $\norm{u_{k}\odot (K v_k)-p} + \Vert{v_{k}\odot (K^\top u_k)-q}\Vert$ becomes small, then returns $\diag (u_k) K \diag(v_k)$. The division $(/)$ and multiplication $(\odot)$ operators between two vectors are to be understood entrywise. Each SK iteration is built from matrix-vector multiplies and element-wise arithmetic operations, and is hence readily parallelized on multi-core CPUs and GPUs.  
However, due to the entropic regularization, SK suffers from numerical issues and can struggle to find even moderately accurate solutions. This problem is even more prevalent in GPU implementations as most modern GPUs are built and optimized for single-precision arithmetic \cite{KW16, CGM14}.  Substantial care is therefore needed to select an appropriate $\eta$ that is small enough to provide a meaningful approximation, while avoiding numerical issues.  In addition, the entropy term enforces a dense solution,  which can be undesirable when the optimal transportation plan itself is of interest~\citep{BSR18}. We mention that there has been substantial research in improving the performance of SK~\citep{ABG19, AWR17, BSR18, LHJ19}. Most of these contributions improve certain aspects of SK: some result in more stable but much slower methods, while others allow to produce sparse solutions but at a much higher cost per iteration. Also, some sophisticated changes make parallelization challenging due to the many branching conditions that they introduce.

\paragraph{Operator splitting solvers for general LP}
With a relatively low per-iteration cost and the ability to exploit sparsity in the problem data, operator splitting methods such as Douglas-Rachford splitting \citep{DR56} and ADMM \citep{GM76} have gained widespread popularity in large-scale optimization.  Such algorithms can quickly produce solutions of moderate accuracy and are the engine of several successful first-order solvers \citep{OCPB16, SBG+20, GCG20}. As OT can be cast as an LP, it can, in principle, be solved by these splitting-based solvers. However, there has not been much success reported in this context, probably due to the \emph{memory-bound} nature of large-scale OTs. For OTs, the main bottleneck is not floating-point computations, but rather time-consuming memory operations on large two-dimensional arrays. Even an innocent update like $X \gets X - C$ is more expensive than the two matrix-vector multiplies in SK. To design a high-performance splitting method, it is thus crucial to minimize the memory operations associated with large arrays. In addition, since these solvers target general LPs, they often solve a linear system at each iteration, which is prohibitively expensive for many OT applications.

\paragraph{Convergence rates of DR for LP} Many splitting methods, including Douglas-Rachford, are known to converge linearly under strong convexity (see e.g. \cite{giselsson2016linear}). Recently, it has been shown that algorithms based on DR/ADMM often enjoy similar convergence properties also in the absence of strong convexity. For example, \citet{LFP17} derived local linear convergence guarantees under mild assumptions;  \citet{applegate2021faster} established global linear convergence for Primal-Dual methods for LPs using restart strategies; and \citet{wang2017new} established linear convergence for an ADMM-based algorithm on general LPs. Yet, these frameworks quickly become intractable on large OT problems, due to the many memory operations required.

\subsection{Contributions}
We demonstrate that DR splitting, when properly applied and implemented, can solve large-scale OT problems to modest accuracy reliably and quickly, while retaining the excellent memory footprint and parallelization properties of the Sinkhorn method. Concretely, we make the following contributions:
\begin{itemize}
    \item We develop a DR splitting algorithm that solves the original OT
        directly, avoiding the numerical issues of SK and forgoing the need for solving linear systems of general solvers. We perform simplifications to eliminate variables so that the final algorithm can be executed with only one matrix variable, while maintaining the same degree of parallelization. Our method implicitly maintains a primal-dual pair,  which facilitates a simple evaluation of stopping criteria.
    \item We derive an iteration complexity $O(1/\epsilon)$ for our method. This
        is a significant improvement over the best-known estimate
        $O(1/\epsilon^2)$ for the Sinkhorn method (\cf~\cite{LHJ19}). We also provide a global linear convergence rate that holds independently of the initialization, despite the absence of strong convexity in the OT problem.
    \item We detail an efficient GPU implementation that fuses many immediate steps into one and performs several on-the-fly reductions between a read and write of the matrix variable. We also show how a primal-dual stopping criterion can be evaluated at no extra cost. The implementation is available as open source and gives practitioners a fast and robust OT solver also for applications where regularized OT is not suitable.
\end{itemize}

\noindent 
As a by-product of solving the original OT problem, our approximate solution is guaranteed to be sparse. Indeed, it is known that DR can identify an optimal support in a finite time, even before reaching a solution \cite{IM20}. 
To avoid cluttering the presentation, we focus on the primal OT, but note that the approach applies equally well to the dual form. Moreover, our implementation is readily extended to other splitting schemes (e.g. \citet{CP11}).

\section{Background}

\paragraph{Notation}
For any $x,y\in\R^n$, $\InP{x}{y}$ is the Euclidean inner product of $x$ and $y$, and $\ltwo{\cdot}$ denotes the $\ell_2$-norm. For matrices $X, Y \in \R^{m\times n}$,  $\InP{X}{Y}=\tr(X^\top Y)$ denotes their inner product and $\lfro{\cdot} = {\sqrt{\InP{\cdot}{\cdot}}}$ is the the induced Frobenius norm. We use $\norm{\cdot}$ to indicate either $\ltwo{\cdot}$ or $\lfro{\cdot}$.
For a closed and convex set $\mc{X}$,
the distance and the projection map are given by
$\dist(x, \mc{X})=\min_{z\in\mc{X}}\norm{z-x}$ and $\proj{\mc{X}}{x}=\argmin_{z\in\mc{X}}\norm{z-x}$, respectively.  The Euclidean projection of $x\in\R^n$ onto the nonnegative orthant is denoted by $[x]_+=\max(x, 0)$, and $\simplex_n = \{x\in \R_+^n : \sum_{i=1}^{n}x_i=1\}$ is the $(n-1)$ dimensional probability simplex.

\paragraph{OT and optimality conditions}
Let $e=\ones_n$ and $f=\ones_m$, and consider the linear mapping
\begin{align*}
    \mc{A}: \R^{m\times n} \to \R^{m+n} :
    X \mapsto ({Xe}, { X^\top f}),
\end{align*}
and its adjoint
$ \mc{A}^* :   \R^{m+n}  \to \R^{m\times n}:
    (y, x)
    \mapsto
    ye^\top + f x^\top.$
The projection onto the range of $\mc{A}$ is
$\proj{\mathrm{ran} \mc{A}} {(y,x)} = (y,x) - \alpha(f, -e)$, where $\alpha= ({f^\top y - e^\top x})/(m+n)$ \cite{BSW21}.
With $b = (p, q) \in \R^{m+n}$, Problem~\eqref{eq:ot:dist} can be written  as a linear program on the form
\begin{align}\label{eq:ot:dist:operator:primal}
    \begin{array}{lll}
        \underset{X \in\R^{m\times n}}{\minimize} \,\,\, \InP{C}{X} 
        \quad
        \subjectto \,\,\,  \mc{A}(X) = b, \quad
                    X \geq 0.
    \end{array}
\end{align} 
Let $(\mu, \nu)\in\R^{m+n}$ be the dual variable associated the  affine constraint in \eqref{eq:ot:dist:operator:primal}. Then, using the definition of $\mc{A}^*$, the dual problem of \eqref{eq:ot:dist:operator:primal}, or equivalently of \eqref{eq:ot:dist}, reads
\begin{align}\label{eq:ot:dist:dual}
    \begin{array}{ll}
        \underset{\mu, \nu}{\maximize}  \,\,\, p^\top \mu + q^\top \nu
        \quad
        \subjectto \,\,\,  \mu e^\top + f \nu^\top \leq C.
    \end{array}
\end{align}
OT is a bounded program and always admits a feasible solution. It is thus guaranteed to have an optimal solution, and the optimality conditions can be summarized as follows.
\begin{proposition}\label{prop:optimality:cond}
A pair $X$ and $(\mu,\nu)$ are primal and dual optimal if and only if: (i) $Xe=p,\; X^\top f = q,\; X \geq 0$, (ii) $\mu e^\top + f \nu^\top - C \leq 0$,  (iii) $\langle X,  C - \mu e^\top - f^\top\nu \rangle = 0$.
\end{proposition}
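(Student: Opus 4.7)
The proposition is a KKT/strong-duality statement for the linear program \eqref{eq:ot:dist:operator:primal}, so the plan is to prove sufficiency by a one-line algebraic identity combined with weak duality, and necessity by appealing to strong LP duality together with the same identity.

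\textbf{Sufficiency.} Assume $X$ satisfies (i), $(\mu,\nu)$ satisfies (ii), and the complementary slackness condition (iii) holds. I would compute
\[
    \InP{C}{X} = \InP{C - \mu e^\top - f\nu^\top}{X} + \InP{\mu e^\top + f\nu^\top}{X}
    = \InP{C - \mu e^\top - f\nu^\top}{X} + \mu^\top (Xe) + \nu^\top (X^\top f),
\]
where the second equality just uses the definition of $\mc{A}^*$. By (i), the last two terms equal $p^\top \mu + q^\top\nu$, and by (iii) the first term vanishes, so $\InP{C}{X} = p^\top\mu + q^\top\nu$. Since $X$ is primal feasible by (i) and $(\mu,\nu)$ is dual feasible by (ii), weak duality applied to \eqref{eq:ot:dist:operator:primal}--\eqref{eq:ot:dist:dual} forces both to be optimal.

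\textbf{Necessity.} Here I would invoke strong LP duality: problem \eqref{eq:ot:dist:operator:primal} is feasible (take $X = p q^\top$, assuming $p,q$ are probability vectors) and its objective is bounded below (by $0$, since $C \geq 0$ and $X \geq 0$), and the same holds for the dual. Hence both problems attain their optima and the duality gap is zero. Given any primal optimum $X^\star$ and dual optimum $(\mu^\star,\nu^\star)$, (i) and (ii) are just primal and dual feasibility. Applying the identity above to $X^\star$ and $(\mu^\star,\nu^\star)$ and using $\InP{C}{X^\star} = p^\top \mu^\star + q^\top \nu^\star$ yields $\InP{C - \mu^\star e^\top - f\nu^{\star\top}}{X^\star} = 0$, which is precisely (iii). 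Note that this inner product is a sum of products of nonnegative numbers, by (i) and (ii), so (iii) is equivalent to the usual entrywise complementary-slackness statement.

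\textbf{Main obstacle.} There is no substantive obstacle: the only nontrivial ingredient is strong duality for LPs, which is standard and does not require Slater (the linear constraint $\mathcal A(X) = b$ and the polyhedral constraint $X \geq 0$ together define a polyhedron on which the objective is bounded). If one prefers a self-contained argument, the strong duality statement can also be obtained by direct application of Farkas' lemma to the system in (i)--(ii), but the cleanest exposition is to cite the LP duality theorem and reduce the proof to the two-line algebraic identity above.
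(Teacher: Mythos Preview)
Your argument is correct and is exactly the standard LP-duality proof one would expect. The paper itself does not give a formal proof of this proposition: it simply states the conditions and remarks that they amount to primal feasibility, dual feasibility, and zero duality gap (equivalently complementary slackness), treating the result as a well-known fact about linear programs. Your write-up makes explicit precisely the algebraic identity and the appeal to weak/strong LP duality that the paper leaves implicit, so there is nothing to add or correct.
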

These conditions mean that: (i) $X$ is primal feasible; (ii) $\mu, \nu$ are dual feasible, and 
(iii) the duality gap is zero $\InP{C}{X} = p^\top \mu + q^\top \nu$, or  equivalently, complementary slackness holds.
Thus, solving the OT problem amounts to (approximately) finding such a primal-dual pair. To this end, we rely on the celebrated Douglas-Rachford splitting method \cite{LM79, EB92, Fuk96, BC17}.\medskip

\noindent{\textbf{Douglas-Rachford splitting}}\hspace*{0.5em}
Consider composite convex optimization problems on the form
\begin{align}\label{eq:composite:problem}
    \minimize_{x\in \R^n} \; f(x) + g(x),
\end{align}
where $f$ and $g$ are proper closed and convex functions. To solve Problem~\eqref{eq:composite:problem}, the  DR splitting algorithm starts from $y_0\in \R^n$ and repeats the following steps:
\begin{align}\label{eq:dr:def}
        x_{k+1} = \prox{\stepsize f} {y_k}, \quad
        z_{k+1} = \prox{\stepsize g}{2 x_{k+1} - y_k}, \quad
        y_{k+1} = y_k +  z_{k+1} - x_{k+1},
    \end{align}
where $\rho>0$ is a penalty parameter.
The preceding procedure can be viewed as a fixed-point iteration $y_{k+1}=T(y_k)$ for the mapping
\begin{align}\label{eq:dr:operator:T}
	T(y) = y + \prox{\stepsize g}{2 \prox{\stepsize f} {y} - y}  - \prox{\stepsize f} {y}.
\end{align}
The DR iterations \eqref{eq:dr:def} can also be derived from the ADMM method applied to an equivalent problem to~\eqref{eq:composite:problem} (see, Appendix~\ref{appendix:dr:admm}). Indeed, they are both special instances of the classical proximal point method in \cite{Roc76}.
As for convergence, \citet{LM79} showed that $T$ is a \emph{firmly-nonexpansive }mapping, from which they obtained convergence of $y_k$. Moreover, the sequence $x_k$ is guaranteed to converge to a minimizer  of  $f+g$ (assuming a minimizer exists) for any choice of  $\rho>0$. In particular, we have the following general convergence result, whose proof can be found in \cite[Corollary~28.3]{BC17}.
\begin{lemma}\label{lem:dr:convergence}
Consider the composite problem \eqref{eq:composite:problem} and its Fenchel–Rockafellar dual defined as
\begin{align}\label{eq:composite:dual:problem}
    \maximize_{u\in \R^n} \,\,\,  -f^*(-u) - g^*(u).
\end{align}
Let $\mc{P}^\star$ and $\mc{D}^\star$ be the sets of solutions to the primal \eqref{eq:composite:problem} and dual \eqref{eq:composite:dual:problem} problems, respectively. Let $x_k, y_k$, and $z_k$ be generated by procedure \eqref{eq:dr:def} and let $u_{k} := (y_{k-1}-x_k)/\stepsize$. Then, there exists $y\opt\in\R^n$ such that $y_k\to y\opt$. Setting $x\opt = \prox{\stepsize f} {y\opt}$ and $u\opt = (y\opt-x\opt)/\stepsize$, then it holds that (i) $x\opt \in \mc{P}^\star$ and $u\opt\in \mc{D}^\star$; (ii) $x_k - z_k \to 0$,  $x_k \to x\opt$ and $z_k \to x\opt$; (iii) $u_k \to u\opt$.
\end{lemma}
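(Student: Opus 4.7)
The plan is to cast the DR recursion as a Krasnosel'skii--Mann iteration of a firmly nonexpansive operator, identify its fixed points with primal--dual optimal pairs of \eqref{eq:composite:problem}--\eqref{eq:composite:dual:problem}, and then transfer convergence to the auxiliary sequences via continuity of the proximal maps. As a first step, I would rewrite the operator in \eqref{eq:dr:operator:T} using the reflected proxes $R_{\rho f} := 2 \prox{\rho f}{y} - y$ (viewed as an operator in $y$) and $R_{\rho g}$ defined analogously, obtaining
\begin{equation*}
T \;=\; \tfrac{1}{2}\bigl(I + R_{\rho g}\circ R_{\rho f}\bigr).
\end{equation*}
Because proximal operators of proper closed convex functions are firmly nonexpansive, $R_{\rho f}$ and $R_{\rho g}$ are nonexpansive, hence their composition is nonexpansive and $T$ is $1/2$-averaged (in particular firmly nonexpansive).

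Next, I would characterize $\mathrm{Fix}\,T$. A point $y\opt$ is a fixed point iff, setting $x\opt := \prox{\rho f}{y\opt}$ and $u\opt := (y\opt - x\opt)/\rho$, the second prox identity $\prox{\rho g}{2 x\opt - y\opt} = x\opt$ holds. Unwinding the two defining prox inclusions yields $u\opt \in \partial f(x\opt)$ and $-u\opt \in \partial g(x\opt)$, which is exactly the primal--dual optimality system associated with \eqref{eq:composite:problem}--\eqref{eq:composite:dual:problem}: hence $x\opt \in \mc{P}^\star$ and, modulo the sign convention used in writing the dual, $u\opt \in \mc{D}^\star$. Existence of a primal minimizer therefore guarantees $\mathrm{Fix}\,T \neq \emptyset$.

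Applying the Krasnosel'skii--Mann theorem to the averaged operator $T$ then delivers $y_k \to y\opt$ for some $y\opt \in \mathrm{Fix}\,T$. Continuity (in fact nonexpansiveness) of $\prox{\rho f}{\cdot}$ gives $x_{k+1} = \prox{\rho f}{y_k} \to \prox{\rho f}{y\opt} = x\opt$; the recursion $y_{k+1} - y_k = z_{k+1} - x_{k+1}$ combined with $\{y_k\}$ being Cauchy forces $x_k - z_k \to 0$, so $z_k \to x\opt$ as well; and finally $u_k = (y_{k-1}-x_k)/\rho \to (y\opt - x\opt)/\rho = u\opt$. I expect the main subtlety to be the fixed-point characterization: one has to unwind both prox inclusions carefully to read off the correct KKT pair and reconcile it with the sign used in defining the dual \eqref{eq:composite:dual:problem}. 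Once that step is done, Krasnosel'skii--Mann convergence and the remaining assertions follow from standard arguments, matching the statement in \cite[Corollary 28.3]{BC17}.
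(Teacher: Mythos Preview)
The paper does not supply its own proof of this lemma; it simply invokes \cite[Corollary~28.3]{BC17}. Your sketch---rewriting $T$ as $\tfrac{1}{2}(I+R_{\rho g}\circ R_{\rho f})$, applying Krasnosel'ski\u{\i}--Mann to the resulting averaged operator, characterizing fixed points through the prox inclusions, and passing to the limit via nonexpansiveness---is precisely the standard argument that underlies that reference, so your approach is both correct and aligned with what the paper defers to.
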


\section{Douglas-Rachford splitting for optimal transport}
To efficiently apply DR to OT, we need to specify the functions $f$ and $g$ as well as how to evaluate their proximal  operations. We begin by introducing a recent result for computing the projection onto the set of \emph{real-valued} matrices with prescribed row and column sums \cite{BSW21}.
\begin{lemma}\label{lem:projection:gen:doubly:stoc:matrices}
Let $e =  \ones_n$ and $f = \ones_m$. Let $p\in \simplex_m$ and $q\in \simplex_n$.
Then, the set  $\mc{X}$ defined by:
\begin{align*}
    \mc{X} :=  \big\{ X\in \R^{m\times n} \, \big| \,  Xe = p \ \mbox{and} \ X^\top f = q  \big\}
\end{align*}
is non-empty, and for every given $X\in\R^{m\times n}$, we have
\begin{align*}
    \proj{\mc{X}}{X}
    =
    X - \frac{1}{n} \left(\left(Xe-p\right)e^\top - \gamma fe^\top\right)
    - \frac{1}{m} \left( f( X^\top f- q)^\top - \gamma fe^\top \right),
\end{align*}
where $\gamma = f^\top \left(Xe-p\right)/(m+n) = e^\top\left( X^\top f-
q\right)/(m+n)$.
\end{lemma}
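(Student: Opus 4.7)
The set $\mc{X}$ is an affine subspace parallel to $\ker\mc{A}$, so once it is non-empty its orthogonal projection is the unique $Y\in\mc{X}$ satisfying $X-Y\in(\ker\mc{A})^\perp=\mathrm{range}(\mc{A}^*)$. The cleanest route is therefore: (1) exhibit one point to get non-emptiness, (2) check directly that the proposed $Y$ lies in $\mc{X}$, and (3) check that $X-Y$ has the form $\mu e^\top + f\nu^\top$. No Lagrangian machinery is strictly needed, and this avoids having to pin down dual multipliers that are in fact only determined up to a scalar (see the last paragraph).

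For non-emptiness I would take the rank-one matrix $X_{0}:=pq^\top$. Since $p\in\simplex_m$ and $q\in\simplex_n$, we have $q^\top e=1$ and $p^\top f=1$, so $X_{0}e=p(q^\top e)=p$ and $X_{0}^\top f=q(p^\top f)=q$. The same identities $p^\top f=e^\top q=1$ also make the two expressions for $\gamma$ agree: writing $a:=Xe-p\in\R^{m}$ and $b:=X^\top f-q\in\R^{n}$, one computes $f^\top a=f^\top X e-1$ and $e^\top b=f^\top X e-1$, hence $(m+n)\gamma=f^\top a=e^\top b$ is well defined.

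To verify membership, I would expand the claimed $Y$ column-sum using $e^\top e=n$:
\begin{align*}
    Ye
    \;=\; Xe-\tfrac{1}{n}\bigl(na-\gamma n f\bigr)-\tfrac{1}{m}\bigl(f\,e^\top b-\gamma n f\bigr)
    \;=\; p+\tfrac{f}{m}\bigl((m+n)\gamma-e^\top b\bigr)
    \;=\; p,
\end{align*}
and, symmetrically, $Y^\top f=q$ follows from $f^\top f=m$ together with $(m+n)\gamma=f^\top a$. For orthogonality, the displayed formula rewrites as
\begin{align*}
    X-Y \;=\; \tfrac{1}{n}(a-\gamma f)e^\top+\tfrac{1}{m}f(b-\gamma e)^\top \;=\; \mu e^\top+f\nu^\top
\end{align*}
with $\mu=(a-\gamma f)/n$ and $\nu=(b-\gamma e)/m$, so $X-Y\in\mathrm{range}(\mc{A}^*)$. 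The two properties together identify $Y$ as $\proj{\mc{X}}{X}$.

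\emph{Where a reader might get stuck.} The only subtle point is the origin of $\gamma$. Solving the KKT system for $\mu,\nu$ directly leaves one degree of freedom, because $\mc{A}^*$ has one-dimensional kernel $\{(cf,-ce):c\in\R\}$ (equivalently, $\mathrm{range}(\mc{A})$ is the hyperplane $f^\top y=e^\top x$). The scalar $\gamma$ is exactly the symmetric choice that sets $f^\top\mu=e^\top\nu=\gamma$; consistency of this choice is what forces the equality $f^\top a=e^\top b$, which in turn is automatic from $p\in\simplex_m$, $q\in\simplex_n$. Verifying the formula by the two direct checks above avoids committing to any particular representative.
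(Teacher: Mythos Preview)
Your proof is correct and self-contained. The paper, by contrast, does not prove the formula from scratch: it simply invokes \cite[Corollary~3.4]{BSW21} and observes that because $p\in\simplex_m$ and $q\in\simplex_n$ one has $f^\top p=1=e^\top q$, so $(p,q)$ already lies in $\mathrm{ran}\,\mc{A}$ and no further correction is needed when specializing the cited result. Your approach is genuinely different in spirit: rather than quoting an external projection formula, you verify directly that the candidate $Y$ satisfies the two defining conditions of an orthogonal projection onto an affine subspace (membership in $\mc{X}$ and $X-Y\in(\ker\mc{A})^\perp=\mathrm{ran}\,\mc{A}^*$). This buys you a fully elementary, paper-independent argument and also surfaces explicitly why the two expressions for $\gamma$ coincide and where the simplex assumptions on $p,q$ enter. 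The paper's route is shorter on the page but opaque without access to \cite{BSW21}; yours would serve a reader better who wants to see the mechanism.
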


\noindent The lemma follows  immediately from  \cite[Corollary~3.4]{BSW21} and
the fact that $(p, q)= \proj{\mathrm{ran} \mc{A}}{(p, \,  q)}$. It implies that $\proj{\mc{X}}{X}$ can be carried out by basic linear algebra operations, such as matrix-vector multiplies and rank-one updates, that can be effectively parallelized.
\subsection{Algorithm derivation}
Our algorithm is based on re-writing~\eqref{eq:ot:dist:operator:primal} on the standard form for DR-splitting~\eqref{eq:composite:problem} using a carefully selected $f$ and $g$ that ensures a rapid convergence of the iterates and an efficient execution of the iterations. In particular, we propose to select $f$ and $g$ as follows:
\begin{align}\label{eq:ot:dist:operator:primal:composite}
    f(X) =  \InP{C}{X} +  \indcfunc{\R^{m\times n}_+}{X}
    \quad \mbox{and} \quad
    g(X) = \indcfunc{\{Y: \mc{A}(Y) = b\}}{X} .
\end{align}
By Lemma~\ref{lem:projection:gen:doubly:stoc:matrices}, we readily have the explicit formula for the proximal operator of $g$, namely, $ \prox{\stepsize g}{\cdot} = \proj{\mc{X}}{\cdot}$. The proximal operator of $f$ can also be evaluated explicitly as:
\begin{align*}
    \prox{\stepsize f}{X} = \proj{\R^{m\times n}_+}{X - \stepsize C} = [X - \stepsize C]_+.
\end{align*}
The Douglas-Rachford splitting applied to this formulation of the OT problem then reads:
\begin{align}\label{eq:drot:general:form}
    X_{k+1} = [Y_k - \stepsize C]_{+},\quad Z_{k+1} =
    \proj{\mc{X}}{2X_{k+1} - Y_k},\quad Y_{k+1} = Y_k + Z_{k+1}- X_{k+1}.
\end{align}
Despite their apparent simplicity, 
the updates in \eqref{eq:drot:general:form} are inefficient to execute in practice due to the many \emph{memory operations} needed to operate on the large arrays $X_k, Y_k, Z_k$ and $C$. 
To reduce the memory access, we will now perform several simplifications to  eliminate variables from \eqref{eq:drot:general:form}. The resulting algorithm can be executed with only one matrix variable while maintaining the same degree of parallelization.

We first note that the linearity of $\proj{\mc{X}}{\cdot}$ allows us to eliminate $Z$, yielding the $Y$-update
\begin{align*}
    Y_{k+1}
    =
    X_{k+1}
    - n^{-1} \big(2X_{k+1}e -Y_k e-p- \gamma_k f \big) e^\top
    - m^{-1} f \big(2 X_{k+1}^\top f - Y_k^\top f- q -  \gamma_k e \big)^\top,
\end{align*}
where $\gamma_k = f^\top \left(2X_{k+1}e -Y_k e-p\right)/(m+n) = e^\top\left(2
X_{k+1}^\top f - Y_k^\top f- q\right) / (m+n)$.
We also define  the following quantities that capture how $Y_k$ affects the update of $Y_{k+1}$
\begin{align*}
	a_k=Y_ke -p, \quad b_k = Y_k^\top f - q, \quad \alpha_k = f^\top a_k/(m+n) =e^\top b_k/(m+n).
\end{align*}
Similarly, for $X_k$, we let:
\begin{align*}
	r_k = X_ke - p, \quad s_k = X_k^\top f - q, \quad \beta_k = f^\top r_k/(m+n)=e^\top s_k /(m+n). 
\end{align*}
Recall that the pair $(r_k, s_k)$ represents the primal residual at $X_k$. Now, the preceding update can be written compactly as
\begin{align}\label{alg:drot:y:update}
    Y_{k+1} = X_{k+1} + \phi_{k+1} e^\top + f \varphi_{k+1}^\top,
\end{align}
where
\begin{align*}
    \phi_{k+1} &= \left(a_k - 2 r_{k+1} + (2\beta_{k+1} - \alpha_k) f\right) / n\\
    \varphi_{k+1} &= \left(b_k - 2 s_{k+1}  + (2\beta_{k+1} - \alpha_k) e\right)/m.
\end{align*}
We can see that the $Y$-update can be implemented using 4 matrix-vector
multiples (for computing $a_k, b_k, r_{k+1}, s_{k+1}$), followed by 2 rank-one
updates.  As a rank-one update requires a read from an input matrix and a write
to an output matrix, it is typically twice as costly as a matrix-vector multiply
(which only writes the output to a vector). Thus, it would involve 8 memory
operations of large arrays, which is still significant.

Next, we show that the $Y$-update can be removed too. Noticing that updating $Y_{k+1}$ from $Y_k$ and $X_{k+1}$ does not need the
\emph{full} matrix $Y_k$, but only the ability to compute $a_k$ and $b_k$. This
allows us to use a single \emph{physical} memory array to represent both the sequences $X_k$ and $Y_k$.  Suppose that we overwrite the matrix $X_{k+1}$ as:
\begin{align*}
    X_{k+1} \gets X_{k+1} + \phi_{k+1} e^\top + f \varphi_{k+1}^\top,
\end{align*}
then after the two rank-one updates, the $X$-array holds the value of $Y_{k+1}$. We can access the actual $X$-value again in the next update, which now reads: $X_{k+2} \gets \big[X_{k+1} - \stepsize C\big]_{+}$. It thus remains to show that $a_k$ and $b_k$ can be computed efficiently. By multiplying both sides of \eqref{alg:drot:y:update} by $e$ and subtracting the result from $p$, we obtain
\begin{align*}
    Y_{k+1} e - p = X_{k+1}e - p + \phi_{k+1} e^\top e + (\varphi_{k+1}^\top e) f.
\end{align*}
Since $e^\top e = n$ and $(b_k - 2s_{k+1})^\top e = (m+n)(\alpha_k - 2\beta_{k+1})$, it holds that $(\varphi_{k+1}^\top e) f = (\alpha_k - 2\beta_{k+1}) f$. We also have $\phi_{k+1} e^\top e = a_{k} - 2r_{k+1} + (2\beta_{k+1} - \alpha_k) f$.
Therefore,  we end up with an extremely simple recursive form for updating $a_{k}$:
\begin{align*}
    a_{k+1} =  a_{k} - r_{k+1}.
\end{align*}
Similarly, we have $b_{k+1} = b_{k} - s_{k+1}$ and $\alpha_{k+1} = \alpha_k - \beta_{k+1}.$
In summary, the DROT method can be implemented with a single matrix variable as summarized in Algorithm~\ref{alg:drot}.

\begin{algorithm}[!t]
    \caption{Douglas-Rachford Splitting for Optimal Transport (DROT)}
    \begin{algorithmic}[1]\label{alg:drot}
        \REQUIRE OT($C$, $p$, $q$), initial point $X_0$, penalty parameter $\stepsize$
        \STATE $\phi_0 = 0$, $\varphi_0 = 0$
        \STATE $a_0=X_0e -p$, $b_0 = X_0^\top f - q$, $\alpha_0 = f^\top a_0 / (m+n)$
        \FOR{$k=0,1, 2,\ldots $}
        \STATE  $X_{k+1} = \big[X_k + \phi_k e^\top + f \varphi_k^\top -\stepsize C\big]_{+}$
        \STATE $r_{k+1} = X_{k+1}e -p$, $s_{k+1} = X_{k+1}^\top f -q$, $\beta_{k+1}= f^\top r_{k+1} / (m+n)$
        \STATE $\phi_{k+1} = \left(a_k - 2 r_{k+1} + (2\beta_{k+1} - \alpha_k) f\right) / n$
        \STATE $\varphi_{k+1} = \left(b_k - 2 s_{k+1}  + (2\beta_{k+1} - \alpha_k) e\right)/m$
        \STATE $a_{k+1} = a_{k} - r_{k+1}$, $b_{k+1} = b_{k} - s_{k+1}$, $\alpha_{k+1} = \alpha_k - \beta_{k+1}$
        \ENDFOR
        \ENSURE $X_K$
    \end{algorithmic}
\end{algorithm}

\paragraph{Stopping criteria}
It is interesting to note that while DROT directly solves to the primal
problem~\eqref{eq:ot:dist}, it  maintains a pair of vectors that \emph{implicitly} plays the role of the dual variables $\mu$ and $\nu$ in the dual problem \eqref{eq:ot:dist:dual}.
To get a feel for this, we note that the optimality conditions in Proposition~\ref{prop:optimality:cond} are equivalent to the existence of a pair $X\opt$ and $(\mu\opt,\nu\opt)$ such that
\begin{align*}
    (X\opt e, {X\opt}^\top f) = (p, q) \quad \mbox{and} \quad X\opt = \big[X\opt + \mu\opt e^\top + f{\nu\opt}^\top -C \big]_+.
\end{align*}
Here, the later condition encodes the nonnegativity constraint, the dual feasibility, and the zero duality gap. The result follows by invoking \cite[Theorem~5.6(ii)]{DD01} with the convex cone $\mc{C}=\R_{+}^{m\times n}$, $y=X\opt\in \mc{C}$ and $z=\mu\opt e^\top + f{\nu\opt}^\top -C \in \mc{C}^\circ$. Now, compared to Step~4 in DROT,  the second condition above suggests that $\phi_k/\stepsize$ and $\varphi_k/\stepsize$ are such implicit variables. To see why this is indeed the case, let $U_{k} = (Y_{k-1} - X_{k})/\stepsize$. Then it is easy to verify that
\begin{align*}
    (Z_{k} - X_{k})/\stepsize
    = (X_{k} - Y_{k-1} + \phi_k e^\top + f \varphi_k^\top)/\stepsize
    = -U_{k} + (\phi_k/\stepsize) e^\top + f (\varphi_k/\stepsize)^\top.
\end{align*}
By Lemma~\ref{lem:dr:convergence}, we have $Z_{k} - X_{k} \to 0$ and $U_{k} \to U\opt \in \R^{m \times n}$, by which it follows that
\begin{align*}
    (\phi_k/\stepsize) e^\top + f (\varphi_k/\stepsize)^\top \to U\opt.
\end{align*}
Finally, by evaluating the conjugate functions $f^*$ and $g^*$ in Lemma~\ref{lem:dr:convergence}, it can be shown that $U\opt$ must have the form $\mu\opt e^\top + f{\nu\opt}^\top$, where $\mu\opt\in \R^m$ and $\nu\opt\in\R^n$ satisfying $\mu\opt e^\top + f{\nu\opt}^\top \leq C$; see Appendix~\ref{appendix:Fenchel-Rockafellar:duality} for details.  
With such primal-dual pairs at our disposal, we can now explicitly evaluate their distance to the set of solutions laid out in Proposition~\ref{prop:optimality:cond} by considering:
\begin{align*}
    r_{\mathrm{primal}} &=\sqrt{\norm{r_k}^2 + \norm{s_k}^2}\\
    r_{\mathrm{dual}} &= \Vert[(\phi_k/\stepsize) e^\top + f
    (\varphi_k/\stepsize)^\top - C]_+\Vert\\
    \mathrm{gap} &= \big|\InP{C}{X_k} - (p^\top \phi_k  + \varphi_k^\top q)/\stepsize\big|.
\end{align*}
As problem~\eqref{eq:ot:dist} is feasible and bounded, Lemma~\ref{lem:dr:convergence} and strong duality guarantees that all the three terms will converge to zero. Thus, we terminate DROT when these become smaller than some user-specified tolerances.

\subsection{Convergence rates}
In this section, we state the main convergence results of the paper, namely a sublinear and a linear rate of convergence of the given splitting algorithm. In order to establish the sublinear rate, we need the following function:
\begin{align*}
    V(X, Z, U) =  f(X) + g(Z) +  \InP{U}{Z-X},
\end{align*}
which is defined for $X\in \R_{+}^{m\times n}$, $Z\in \mc{X}$ and $U\in \R^{m\times n}$. We can now state the first result.
\begin{theorem}[Sublinear rate]\label{thm:drot:sublinear:rate} Let $X_k, Y_k, Z_k$ be the  generated by procedure \eqref{eq:drot:general:form}.  Then,  for any $X\in \R_{+}^{m\times n}$, $Z\in \mc{X}$,  and  $Y\in \R^{m\times n}$, we have
\begin{align*}
	V(X_{k+1}, Z_{k+1}, (Y-Z)/\stepsize) &-  V(X, Z, (Y_k-X_{k+1})/\stepsize)
	\leq 
	\frac{1}{2\stepsize} \norm{Y_k -Y}^2 - \frac{1}{2\stepsize} \norm{Y_{k+1} -Y}^2.
\end{align*}
Furthermore, let $Y\opt$ be a fixed-point of $T$ in \eqref{eq:dr:operator:T} and
let $X\opt$ be a solution of \eqref{eq:ot:dist} defined from $Y\opt$ in the
manner of Lemma~\ref{lem:dr:convergence}. Then, it holds that
\begin{align*}
	\InP{C}{\bar{X}_k} - \InP{C}{X\opt} 
	&\leq 
		\frac{1}{k}\left(
			\frac{1}{2\stepsize } \norm{Y_0}^2 
			+ 
			\frac{2}{\stepsize } \norm{X\opt} \norm{Y_0-Y\opt}\right)
	\nonumber\\
	\norm{\bar{Z}_k-\bar{X}_k}
	&= \frac{\norm{Y_k-Y_0}}{k}
	\leq \frac{2\norm{Y_0-Y\opt}}{k},
\end{align*}
where $\bar{X}_k = \sum_{i=1}^{k}{X_i}/k$ and $\bar{Z}_k = \sum_{i=1}^{k}{Z_i}/k$.
\end{theorem}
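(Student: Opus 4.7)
The plan has three phases: (i) establish the one-step inequality by reading off subgradient inequalities from the prox definitions in \eqref{eq:drot:general:form}, (ii) telescope it after choosing reference points $X, Z, Y$ that kill the awkward iterate-dependent term, and (iii) convert the resulting sum into the claimed $O(1/k)$ bounds using convexity and firm non-expansiveness.

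For the one-step inequality, I would start from the prox optimality conditions. The update $X_{k+1} = \prox{\stepsize f}{Y_k}$ gives $(Y_k - X_{k+1})/\stepsize \in \partial f(X_{k+1})$, while $Z_{k+1} = \prox{\stepsize g}{2X_{k+1} - Y_k}$ combined with $Y_{k+1} = Y_k + Z_{k+1} - X_{k+1}$ yields $(X_{k+1} - Y_{k+1})/\stepsize \in \partial g(Z_{k+1})$. Substituting the resulting convex lower bounds on $f(X)$ and $g(Z)$ into the difference $V(X_{k+1}, Z_{k+1}, (Y-Z)/\stepsize) - V(X, Z, (Y_k - X_{k+1})/\stepsize)$, and regrouping so that the $X$- and $(Z-X)$-pieces merge into $\InP{Y_k - X_{k+1}}{X_{k+1} - Z}$, I expect all $X, Z$-dependence to cancel and the left-hand side to reduce to an expression in $Y_k - Y$ and $Y_{k+1} - Y_k = Z_{k+1} - X_{k+1}$ alone. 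The identity $\norm{Y_{k+1} - Y}^2 = \norm{Y_k - Y}^2 + 2\InP{Y_{k+1} - Y_k}{Y_k - Y} + \norm{Y_{k+1} - Y_k}^2$ then delivers the telescoping bound, with a $\frac{1}{2\stepsize}\norm{Y_{k+1} - Y_k}^2$ slack.

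For the $O(1/k)$ objective rate, I specialize to $X = Z = X\opt$ (legal since $X\opt$ is primal feasible, so $X\opt \in \R_+^{m\times n} \cap \mc{X}$) and $Y = 0$: the choice $Z = X$ kills the $(Y_i - X_{i+1})/\stepsize$-term (because $Z - X = 0$), and $Y = 0$ produces the $\norm{Y_0}^2$-factor. Using $g(X\opt) = g(Z_{i+1}) = 0$ and summing from $i = 0$ to $k - 1$ gives
\begin{align*}
\sum_{i=0}^{k-1}\!\Big[f(X_{i+1}) - f(X\opt) - \frac{1}{\stepsize}\InP{X\opt}{Z_{i+1} - X_{i+1}}\Big] \leq \frac{1}{2\stepsize}\norm{Y_0}^2 - \frac{1}{2\stepsize}\norm{Y_k}^2.
\end{align*}
The middle sum collapses because $\sum_{i=0}^{k-1}(Z_{i+1} - X_{i+1}) = Y_k - Y_0$ (a direct consequence of the $Y$-update). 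Applying Jensen to $f$, using $\bar X_k \in \R_+^{m\times n}$ so $f(\bar X_k) = \InP{C}{\bar X_k}$, and Cauchy--Schwarz on the cross term then gives $k\InP{C}{\bar X_k - X\opt} \leq \frac{1}{2\stepsize}\norm{Y_0}^2 + \frac{1}{\stepsize}\norm{X\opt}\,\norm{Y_k - Y_0}$. Firm non-expansiveness of $T$ at its fixed point $Y\opt$ (Lemma \ref{lem:dr:convergence}) gives $\norm{Y_k - Y\opt} \leq \norm{Y_0 - Y\opt}$, hence $\norm{Y_k - Y_0} \leq 2\norm{Y_0 - Y\opt}$ by the triangle inequality, completing the first bound. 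The residual equality $\bar Z_k - \bar X_k = (Y_k - Y_0)/k$ is immediate from $Z_i - X_i = Y_i - Y_{i-1}$, and the same triangle estimate yields the $2\norm{Y_0 - Y\opt}/k$ bound.

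The main obstacle is the algebraic cancellation inside the one-step inequality: after the two subgradient substitutions half a dozen inner products remain, with $X$- and $Z$-dependence that is not a priori benign. The correct regrouping---guided by the fact that the target right-hand side only sees $Y_k - Y$ and $Y_{k+1} - Y_k$---forces the reference-dependent pieces to cancel, after which the squared-norm identity closes the argument.
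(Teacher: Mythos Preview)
Your proposal is correct and follows essentially the same route as the paper: subgradient/projection optimality conditions for the two prox steps, cancellation of the $X,Z$-dependence, the three-point identity to obtain the telescoping one-step bound, then specialization to $X=Z=X\opt$ and summation. The only cosmetic difference is that you fix $Y=0$ and read off $\bar Z_k-\bar X_k=(Y_k-Y_0)/k$ directly from the $Y$-update, whereas the paper keeps $Y$ free, telescopes, and then infers that same equality from the arbitrariness of $Y$; your way is more direct but the content is identical.
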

The theorem implies that one can compute a solution satisfying $\InP{C}{X}-\InP{C}{X\opt} \leq \epsilon$ in $O(1/\epsilon)$ iterations. This is a significant improvement over the best-known iteration complexity $O(1/\epsilon^2)$ of the Sinkhorn method (\cf~\citep{LHJ19}).
Note that the linearity of $\InP{C}{\cdot}$ allows to update the \emph{scalar} value $\InP{C}{\bar{X}_k}$ recursively without ever needing to form the ergodic sequence $\bar{X}_k$. Yet, in terms of rate, this result is still conservative, as the next theorem shows.

\begin{theorem}[Linear rate]\label{thm:drot:linear:rate}
Let $X_k$ and $Y_k$ be generated by \eqref{eq:drot:general:form}. Let $\mc{G}\opt$ be the set of fixed points of $T$ in \eqref{eq:dr:operator:T} and let $\mc{X}\opt$ be the set of primal solutions to \eqref{eq:ot:dist}. Then, $\{Y_k\}$ is bounded,  $\norm{Y_k}\leq M$ for all $k$, and 
\begin{align*}
	\dist (Y_k,  \mc{G}\opt) &\leq  \dist(Y_0, \mc{G}\opt) \times r^k\\
	\dist (X_k, \mc{X}\opt) &\leq \dist(Y_0, \mc{G}\opt) \times r^{k-1},
\end{align*}
where $r = c/\sqrt{c^2+1} < 1$, $c = \gamma(1+\stepsize(\Vert e \Vert + \Vert f \Vert)) \geq 1$, and $\gamma = \theta_{\mc{S}\opt}(1+\stepsize^{-1}(M+1))$. Here, $\theta_{\mc{S}\opt}>0$ is a problem-dependent constant characterized by the primal-dual solution sets only. 
\end{theorem}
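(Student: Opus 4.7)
The plan is to combine firm nonexpansiveness of the DR operator $T$ with a global Hoffman-type error bound rooted in the polyhedrality of both proximal maps. The firm-nonexpansiveness piece gives a Fej\'er descent on $\dist(Y_k,\mc{G}\opt)$; the error bound lower-bounds the fixed-point residual $\norm{Y_k - Y_{k+1}}$ by a constant multiple of $\dist(Y_k,\mc{G}\opt)$; together they yield a uniform linear contraction, and a nonexpansiveness argument transfers the rate to $X_k$.

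First, since $T$ is firmly nonexpansive and $\mc{G}\opt = \mathrm{Fix}(T) \neq \emptyset$ by Lemma~\ref{lem:dr:convergence}, for every $Y\opt \in \mc{G}\opt$ one has $\norm{Y_{k+1} - Y\opt}^2 + \norm{Y_k - Y_{k+1}}^2 \leq \norm{Y_k - Y\opt}^2$. Choosing $Y\opt$ as the projection of $Y_k$ onto $\mc{G}\opt$ yields
\begin{align*}
    \dist^2(Y_{k+1}, \mc{G}\opt) + \norm{Y_k - Y_{k+1}}^2 \leq \dist^2(Y_k, \mc{G}\opt),
\end{align*}
and Fej\'er monotonicity around any fixed $Y\opt$ gives a uniform bound $\norm{Y_k} \leq M$ expressible through $Y_0$ and $Y\opt$.

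Next I would establish the error bound. The map $\prox{\stepsize f}(Y) = [Y - \stepsize C]_+$ is piecewise affine on polyhedral pieces, and $\prox{\stepsize g} = \proj{\mc{X}}{\cdot}$ is affine by Lemma~\ref{lem:projection:gen:doubly:stoc:matrices}, so $T$ is polyhedral. Writing $X = \prox{\stepsize f}(Y)$, $Z = \proj{\mc{X}}{2X - Y}$, and the would-be dual $U = (Y - X)/\stepsize$, the construction already enforces $X \geq 0$, $U \leq C$, and componentwise complementary slackness. The only KKT violations that remain are the primal infeasibility of $X$ (namely $X \notin \mc{X}$) and the failure of $U$ to lie in the range of $\mc{A}^*$; both are controlled by $Y - T(Y) = X - Z$ via the explicit rank-one correction of Lemma~\ref{lem:projection:gen:doubly:stoc:matrices}. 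Casting the primal-dual solution set as the solution set of a polyhedral system and applying Hoffman's lemma with constant $\theta_{\mc{S}\opt}$, then bounding the dual variables on $\{\norm{Y} \leq M+1\}$, produces the bound
\begin{align*}
    \dist(Y, \mc{G}\opt) \leq \sqrt{c^2+1}\,\norm{Y - T(Y)},
\end{align*}
in which the factor $\stepsize^{-1}(M+1)$ inside $\gamma$ appears when translating $Y$-residuals into $(X,U)$-residuals, and the factor $1+\stepsize(\norm{e}+\norm{f})$ inside $c$ appears when translating the range-of-$\mc{A}^*$ violation back into a bound on $Y - T(Y)$ through the structure $U = \mu e^\top + f\nu^\top$.

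Combining this with the first step and $Y_{k+1} = T(Y_k)$ gives
\begin{align*}
    \dist^2(Y_{k+1},\mc{G}\opt) \leq \bigl(1 - \tfrac{1}{c^2+1}\bigr)\dist^2(Y_k,\mc{G}\opt) = r^2\,\dist^2(Y_k,\mc{G}\opt),
\end{align*}
which iterates to the advertised bound on $\dist(Y_k,\mc{G}\opt)$. For the primal iterate, using $X_k = \prox{\stepsize f}(Y_{k-1})$, nonexpansiveness of $\prox{\stepsize f}$, and $\prox{\stepsize f}(\mc{G}\opt) \subseteq \mc{X}\opt$ from Lemma~\ref{lem:dr:convergence}, I conclude $\dist(X_k, \mc{X}\opt) \leq \dist(Y_{k-1}, \mc{G}\opt) \leq r^{k-1}\dist(Y_0,\mc{G}\opt)$. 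The principal obstacle is the error-bound step: setting up the correct polyhedral description of the solution set, writing $Y - T(Y)$ as a linear function of the KKT violations uniformly on $\{\norm{Y}\leq M+1\}$, and tracking precisely how the Hoffman constant $\theta_{\mc{S}\opt}$ is inflated by $\stepsize$, $\norm{e}$, $\norm{f}$, and $M$ to reproduce the advertised $c$. The remaining manipulations are routine.
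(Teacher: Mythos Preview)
Your high-level plan---firm nonexpansiveness to get Fej\'er descent, a Hoffman-type error bound linking $\dist(Y_k,\mc{G}\opt)$ to the fixed-point residual, then combining the two and transferring to $X_k$ via nonexpansiveness of $\prox{\stepsize f}$---is exactly the paper's strategy, and your final manipulations match. The difference lies in how the dual variable is chosen for the error-bound step, and there your proposal has a real issue.

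You take $U=(Y-X)/\stepsize$ with $X=\prox{\stepsize f}(Y)$, observe $X\geq 0$, $U\leq C$, and \emph{componentwise} complementary slackness, and then propose to apply Hoffman. But componentwise complementary slackness $X_{ij}(C-U)_{ij}=0$ is bilinear, so the system you describe is not polyhedral and Hoffman does not apply to it directly. The paper avoids this by working with the \emph{linear} strong-duality equation $\InP{C}{X}-p^\top\mu-q^\top\nu=0$ in variables $(X,Z,\mu,\nu)$, which requires the dual to be parameterized as a vector pair rather than a matrix. Concretely, the paper exploits the identity $Y_k=X_k+\stepsize(\mu_k e^\top+f\nu_k^\top)$, valid for the iterates with $k\geq 1$ (this is exactly \eqref{alg:drot:y:update}), so that $(\mu_k,\nu_k)$ is automatically in the right form; it then shows (its Lemma on variable representation) that the remaining KKT residuals are precisely $Z_k-X_k=Y_k-Y_{k-1}$, $[\mu_k e^\top+f\nu_k^\top-C]_+\leq\stepsize^{-1}[Y_k-Y_{k-1}]_+$, and $\InP{C}{X_k}-p^\top\mu_k-q^\top\nu_k=-\stepsize^{-1}\InP{Y_k}{Y_k-Y_{k-1}}$. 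This is where the factor $1+\stepsize^{-1}(M+1)$ in $\gamma$ actually comes from (the $M$ enters via $|\InP{Y_k}{Y_k-Y_{k-1}}|\leq M\norm{Y_k-Y_{k-1}}$), not from ``bounding the dual variables on $\{\norm{Y}\leq M+1\}$.''

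Your account of the second factor is also off: $1+\stepsize(\norm{e}+\norm{f})$ does not arise from a range-of-$\mc{A}^*$ violation---there is none in the paper's setup---but from translating the primal-dual distance back to a $Y$-distance via $Y_k-Y\opt=(X_k-X\opt)+\stepsize(\mu_k-\mu\opt)e^\top+\stepsize f(\nu_k-\nu\opt)^\top$ for a suitable $Y\opt\in\mc{G}\opt$ built from the projection of $(X_k,Z_k,\mu_k,\nu_k)$ onto $\mc{S}\opt$. So the constants you quote are correct, but the mechanism you describe for producing them is not, and your choice $U=(Y-X)/\stepsize$ (which in the paper's indexing is $(Y_k-X_{k+1})/\stepsize$, one step off) will not reproduce them without substantial rerouting.
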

This means that an $\epsilon$-optimal solution can be computed in $O(\log 1/\epsilon)$ iterations.
However, it is, in general, difficult to estimate the convergence factor $r$, and it may in the worst case be close to one. In such settings, the sublinear rate will typically dominate for the first iterations. In either case, DROT always satisfies the better of the two bounds at each $k$.

\subsection{Implementation}
In this section, we detail our implementation of DROT to exploit parallel processing on GPUs. We review only the most basic concepts of GPU programming necessary to describe our kernel and refer to \citep{KW16, CGM14} for comprehensive treatments.

\paragraph{Thread hierarchy} When a kernel function is launched, a large number of threads are generated to execute its statements. These threads are organized into a two-level hierarchy. A \emph{grid} contains multiple blocks and a \emph{block} contains multiple threads. Each block is scheduled to one of the streaming multiprocessors (SMs) on the GPU concurrently or sequentially, depending on available hardware.
While all threads in a thread block run \emph{logically} in parallel, not all of them can run \emph{physically} at the same time. As a result, different threads in a thread block may progress at a different pace. Once a thread block is scheduled to an SM, its threads are further partitioned into \emph{warps}. A warp consists of 32 consecutive threads that execute the same instruction at \emph{the same time}. Each thread has its own instruction address counter and register state, and carries out the current instruction on its own data.

\paragraph{Memory hierarchy} \emph{Registers}  and \emph{shared memory}  (``on-chip'') are the fastest memory spaces on a GPU. Registers are private to each thread, while shared memory is visible to all threads in the same thread block.  An automatic variable declared in a kernel is generally stored in a register. Shared memory is programmable, and users have full control over when data is moved into or evicted from the shared memory. It enables block-level communication, facilitates reuse of on-chip data, and can greatly reduce the global memory access of a kernel. However, there are typically only a couple dozen registers per thread and a couple of kBs shared memory per thread block.
The largest memory on a GPU card is \emph{global memory}, physically separated from the compute chip (``off-chip''). All threads can access global memory, but its latency is much higher, typically hundreds of clock cycles.\footnote{\url{https://docs.nvidia.com/cuda/cuda-c-programming-guide/index.html}} Therefore, minimizing global memory transactions is vital to a high-performance kernel. When all threads of a \emph{warp} execute a load (store) instruction that access \emph{consecutive}  memory locations, these will be \emph{coalesced} into as few transactions as possible.  For example, if they access consecutive 4-byte words such as \emph{float32} values, four coalesced 32-byte transactions will service that memory access.
\begin{figure*}[!t]
    \centering
    \vskip -1.2cm
    {\includegraphics[width=0.9\textwidth]{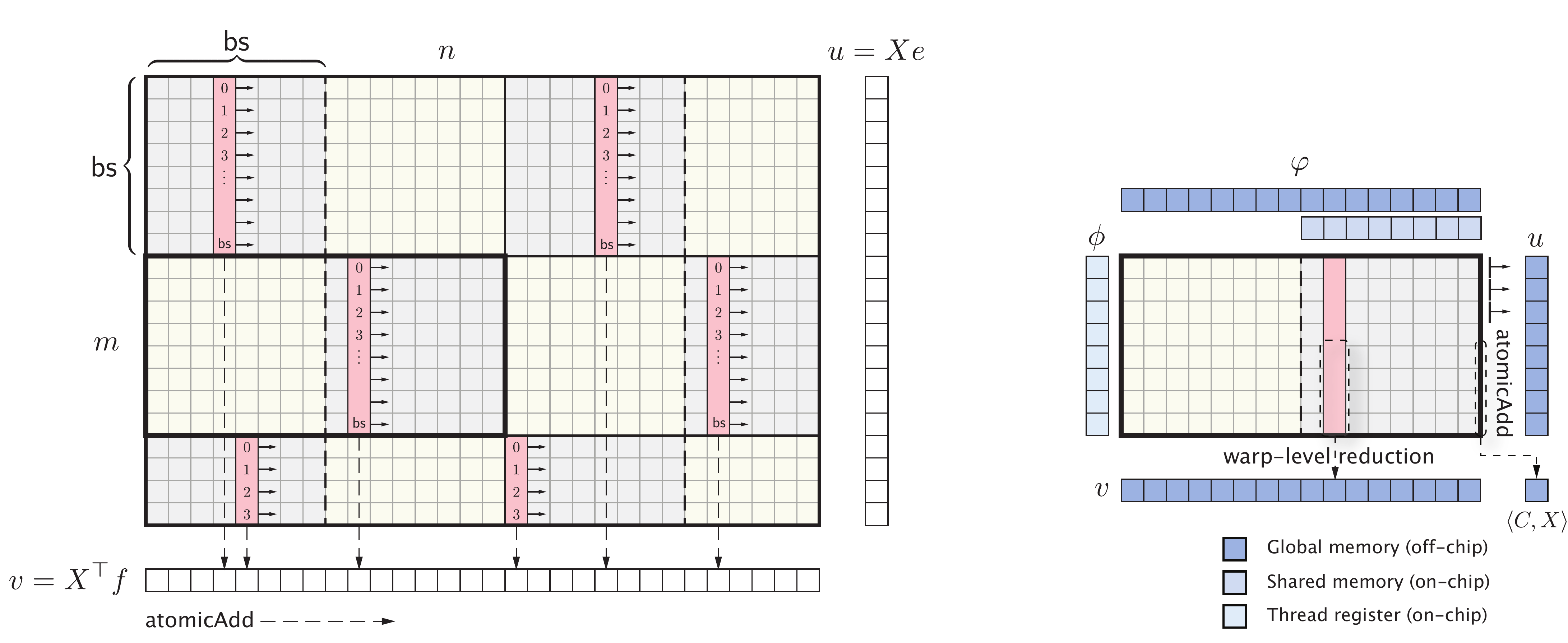}}
    \caption{ \footnotesize Left: Logical view of the main kernel. To expose sufficient parallelism to GPU, we organize threads into a 2D grid of blocks ($3 \times 2$ in the figure), which allows several threads per row. The threads are then grouped in 1D blocks (shown in red) along the columns of $X$. This ensures that global memory access is aligned and \emph{coalesced} to maximize bandwidth utilization. We use the parameter work-size $\mathsf{ws}$ to indicate how many elements of a row each thread should handle. For simplicity, this parameter represents multiples of the block size $\mathsf{bs}$. Each arrow denotes the activity of a single thread in a thread block. Memory storage is assumed to be column-major. Right: Activity of a normal working block, which handles a submatrix of size $\mathsf{bs}\times (\mathsf{ws}\cdot\mathsf{bs}).$}
    \label{fig:kernel}
    \vskip -0.3cm
\end{figure*}

Before proceeding further, we state the main result of the section.
\begin{claim}\label{claim:drot}
On average, an iteration of DROT, including all the stopping criteria, can be done using 2.5 memory operations of $m \times n$ arrays. In particular, this includes one read from and write to $X$, and one read from $C$ in every other iteration.  
\end{claim}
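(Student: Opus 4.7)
\emph{Approach.} The plan is to refactor Algorithm~\ref{alg:drot} so that the single physical matrix variable alternates between holding two different quantities, and then to fuse all per-iteration computation into one GPU kernel. The key observation is that the update in line~4, $X_{k+1} = [X_k + \phi_k e^\top + f \varphi_k^\top - \stepsize C]_+$, depends on $X_k$ and $C$ only through the combination $W_k := X_k - \stepsize C$, since $X_{k+1} = [W_k + \phi_k e^\top + f \varphi_k^\top]_+$ and $W_{k+1} = X_{k+1} - \stepsize C$. I would therefore let the physical array hold $X_k$ on even $k$ and $W_k$ on odd $k$. Then on an even iteration the kernel must read $X_k$, read $C$, form $X_{k+1}$ and $W_{k+1}$ elementwise in registers, and write $W_{k+1}$: three operations on $m \times n$ arrays. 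On an odd iteration it reads $W_k$, forms $X_{k+1}$ in registers, and writes it back: two operations. The cycle costs five operations every two iterations, giving the claimed average of $2.5$.

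\emph{Fusing the remaining work.} It remains to check that everything else in an iteration carries negligible $m \times n$ memory traffic. The rank-one correction vectors $\phi_k, \varphi_k$ and the constant vectors $p, q, e, f$ all have size at most $m+n$; they can be staged once in shared memory per block and applied elementwise during the single pass through the tile. The row and column residuals $r_{k+1} = X_{k+1}e - p$ and $s_{k+1} = X_{k+1}^\top f - q$ are produced on the fly by warp- and block-level reductions as each thread processes its share of $X_{k+1}$; the outputs are vectors of length $m$ and $n$ and do not incur $m\times n$ traffic. The subsequent vector updates of lines~6--8 are $O(m+n)$ and can be executed in a light post-kernel.

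\emph{Stopping criteria and main obstacle.} The primal residual depends only on the vectors $r_k, s_k$ just produced. The dual residual $\norm{[(\phi_k/\stepsize) e^\top + f(\varphi_k/\stepsize)^\top - C]_+}$ and the gap $|\InP{C}{X_k} - (p^\top \phi_k + \varphi_k^\top q)/\stepsize|$ both involve $C$, so they can be accumulated via in-block reductions to scalars only in the pass where $C$ is already in registers, namely on an even iteration. Since the full primal-dual-gap triple is produced every two iterations, this is consistent with the ``on average'' qualifier in the statement. The main obstacle in making this rigorous is not mathematical but one of engineering accounting: one must verify that a single kernel launch can simultaneously perform the elementwise arithmetic, the two rank-one corrections, the row and column reductions, the inner-product accumulation, and the thresholded reduction, all within the register and shared-memory budget of a GPU block. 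Once this is granted, the $2.5$-operation count on $m \times n$ arrays follows immediately from the alternation above.
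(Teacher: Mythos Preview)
Your proposal is correct and essentially identical to the paper's argument: you introduce the same alternation between storing $X_k$ and $X_k-\stepsize C$ in the single physical array, fuse the rank-one corrections and row/column reductions into the same pass, and note that the $C$-dependent pieces of the stopping criterion are harvested on the pass where $C$ is already loaded. The only cosmetic difference is that you name the auxiliary quantity $W_k$ explicitly, whereas the paper describes the trick verbally; your treatment of the dual-residual accumulation is, if anything, slightly more explicit than the paper's.
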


\paragraph{Main kernel} 
Steps~4--5 and the stopping criteria are the main computational components of
DROT, since they involve matrix operations. We will design an efficient kernel
that: (i) updates $X_{k}$ to $X_{k+1}$, (ii) computes $u_{k+1} := X_{k+1}e$ and
$v_{k+1} := X_{k+1}^\top f$, (iii) evaluates $\InP{C}{X_{k+1}}$ in the duality
gap expression. The kernel fuses many immediate steps into one and performs
several on-the-fly reductions while updating $X_k$, thereby minimizing global
memory access.

Our kernel is designed so that each thread block handles a \emph{submatrix} $x$ of size $\mathsf{bs}\times (\mathsf{ws}\cdot\mathsf{bs})$, except for the corner blocks which will have fewer rows and/or columns. Since all threads in a block need the same values of $\varphi$, it is best to read these into shared memory once per block and then let threads access them from there.
We, therefore, divide $\varphi$ into chunks of the block size and set up a loop to let the threads collaborate in reading chunks in a coalesced fashion into shared memory.
Since each thread works on a single row, it accesses the same element of $\phi$ throughout, and we can thus load and store that value directly in a register.  These allow maximum reuse of $\varphi$ and $\phi$. 

In the $j$-th step, the working block loads column $j$ of $x$ to the chip in a coalesced fashion. Each thread $i \in \{0, 1, \ldots, \mathsf{bs-1}\}$ uses the loaded $x_{ij}$ to compute and store $x_{ij}^+$ in a register:
\begin{align*}
	x_{ij}^+ = \max(x_{ij} + \phi_i  +  \varphi_j -\stepsize c_{ij}, 0),
\end{align*}
where $c$ is the corresponding submatrix of $C$.
As sum reduction is order-independent, it can be done locally at various levels, and local results can then be combined to produce the final value.
We, therefore, reuse $x_{ij}^+$ and perform several such partial reductions.  First, to compute the local value for $u_{k+1}$, at column $j$, thread $i$ simply adds $x_{ij}^+$ to a running sum kept in a register. The reduction leading to $\InP{C}{X_{k+1}}$ can be done in the same way. The vertical reduction to compute $v_{k+1}$ is more challenging as coordination between threads is needed. 
We rely on \emph{warp-level} reduction in which the data exchange is performed between registers. This way, we can also leverage efficient CUDA's built-in functions for collective communication at warp-level.\footnote{\url{https://developer.nvidia.com/blog/using-cuda-warp-level-primitives/}}
When done, the first thread in a warp holds the total reduced value and simply adds it atomically to a proper coordinate of $v_{k+1}$. Finally, all threads write $x_{ij}^+$ to the global memory. The process is repeated by moving to the next column. 

When the block finishes processing the last column of the submatrix $x$,  each thread adds its running sum atomically to a proper coordinate of $u_{k+1}$. They then perform one vertical (warp-level) reduction to collect their private sums to obtain the partial cost value $\InP{c}{x}$. When all the submatrices have been processed, we are granted all the quantities 
described in (i)--(iii), as desired. It is essential to notice that each entry of $x$ is only read and written once during this process. To conclude Claim~\ref{claim:drot}, we note that one can skip the load of $C$ in every other iteration. Indeed, if in iteration $k$, instead of writing $X_{k+1}$, one writes the value of $X_{k+1}-\stepsize C$, then the next update is simply  $X_{k+2}=\big[X_{k+1} + \phi_{k+1} e^\top + f \varphi_{k+1}^\top\big]_{+}$.
Finally, all the remaining updates in DROT only involve vector and scalar operations, and can thus be finished off with a simple auxiliary kernel.

\section{Experimental results}
In this section, we perform experiments to validate our method and to demonstrate its efficiency both in terms of accuracy and speed. We focus on comparisons with the Sinkhorn method, as implemented in the POT toolbox\footnote{\url{https://pythonot.github.io}}, due to its minimal per-iteration cost and its publicly available GPU implementation.
All runtime tests were carried out on an NVIDIA Tesla T4 GPU with 16GB of global memory. The CUDA C++ implementation of DROT is open source and available at \url{https://github.com/vienmai/drot}.

We consider six instances of SK, called SK1--SK6, corresponding to $\eta = 10^{-4}, 10^{-3}, 5\times 10^{-3}, 10^{-2}, 5 \times 10^{-2}, 10^{-1}$, in that order.
Given $m$ and $n$, we generate source and target samples as  $x_{\mathrm{s}}$ and $x_{\mathrm{t}}$, whose entries are drawn from a 2D Gaussian distribution with randomized means and covariances  $(\mu_{\mathrm{s}}, \Sigma_{\mathrm{s}})$ and $(\mu_{\mathrm{t}}, \Sigma_{\mathrm{t}})$. Here, $\mu_{\mathrm{s}}\in \R^2$ has  normal distributed entries, and $\Sigma_{\mathrm{s}} = A_{\mathrm{s}}A_{\mathrm{s}}^\top$, where $A_{\mathrm{s}} \in \R^{2\times 2}$ is a matrix with random entries in $[0,1]$; $\mu_{\mathrm{t}} \in \R^2$ has entries generated from $\mc{N}(5, \sigma_{\mathrm{t}})$ for some $\sigma_{\mathrm{t}}>0$, and $\Sigma_{\mathrm{t}}\in \R^{2\times 2}$ is generated similarly to $\Sigma_{\mathrm{s}}$. Given $x_\mathrm{s}$ and $x_\mathrm{t}$, the cost matrix  $C$ represents pair-wise squared Euclidean distance between samples: $C_{ij} = \ltwo{x_\mathrm{s}[i]- x_\mathrm{t}[j]}^2$ for $i\in \{1, \ldots, m\}$ and $j\in \{1, \ldots, n\}$ and is  normalized to have $\norm{C}_{\infty}=1$. The marginals $p$ and $q$ are set to $p=\ones_m/m$ and $q=\ones_n/n$. For simplicity, we set the penalty parameter $\rho$ in DROT to $\rho_0/(m+n)$ for some constant $\rho_0$. This choice is inspired by a theoretical limit in the \citet{CP11} method: $\stepsize< 1/\norm{\mc{A}}^2$, where $\norm{\mc{A}}^2=\norm{e}^2+\norm{f}^2=m+n$. We note however that this choice is conservative and it is likely that better performance can be achieved with more careful selection strategies (cf.~\cite{BPC11}). Finally, DROT always starts at $X_0=pq^\top$.

\paragraph{Robustness and accuracy}
For each method, we evaluate for each $\epsilon>0$ the percentage of experiments for which 
$\abs{f(X_K) - f(X\opt)}/f(X\opt) \leq \epsilon$, where $f(X\opt)$ is the optimal value and $f(X_K)$ is the objective value at termination. An algorithm is terminated as soon as the constraint violation goes below $10^{-4}$ or $1000$ iterations have been executed. Figure~\ref{fig:profile} depicts the fraction of problems that are successfully solved up to an accuracy level $\epsilon$  given on the $x$-axis. For each subplot,  we set $m=n=512$ and generate 100 random problems, and set $\rho_0=2$.
\begin{figure*}[!t]
   \vskip-0.3in
    \centering
    \begin{minipage}{0.35\textwidth}
        \centering
        {\includegraphics[width=1.\textwidth]{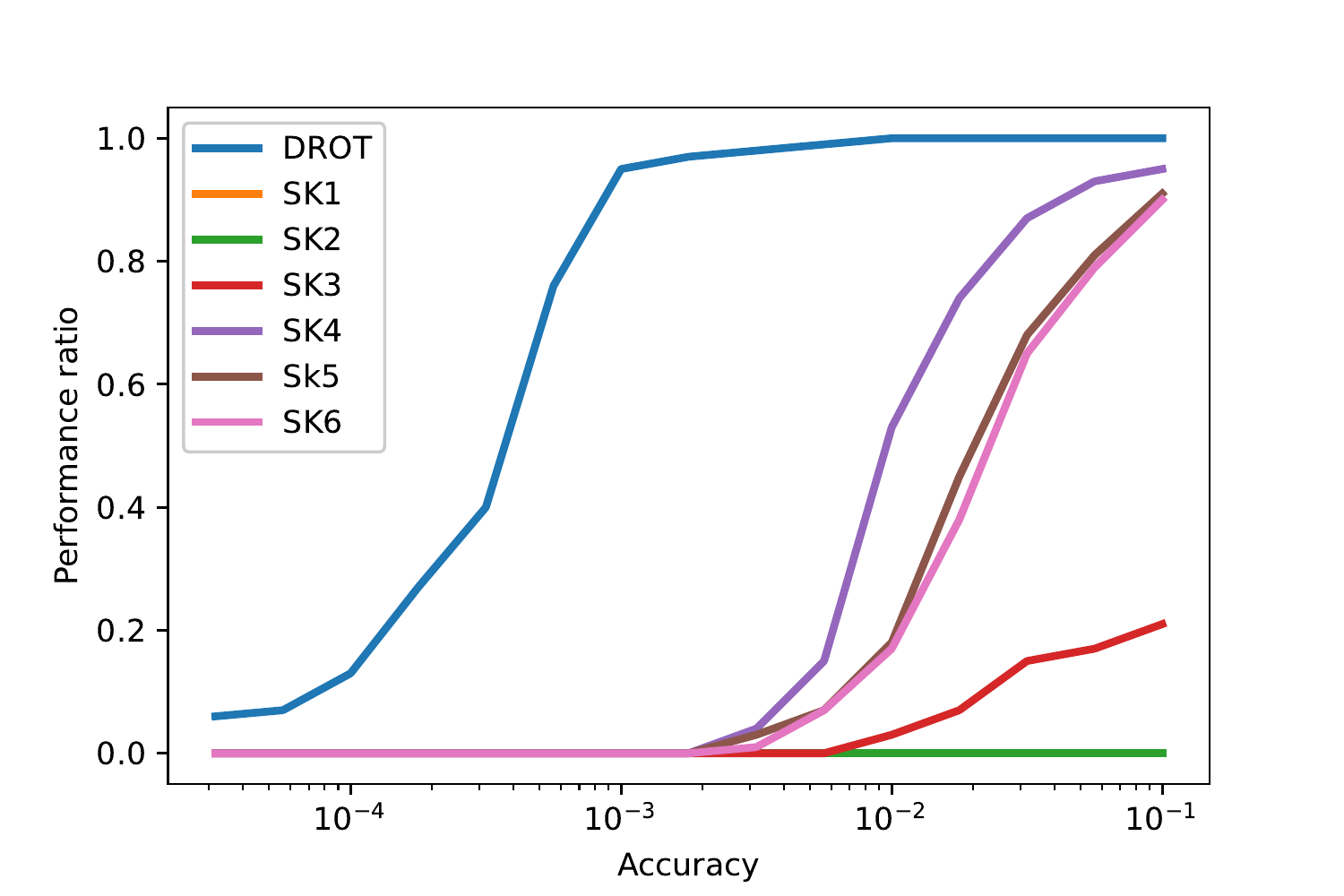}}
        \subcaption{Float 32, $\sigma_{\mathrm{t}}=5$}
    \end{minipage}
    \hskip-0.2in
    \begin{minipage}{0.35\textwidth}
        \centering
        {\includegraphics[width=1.\textwidth]{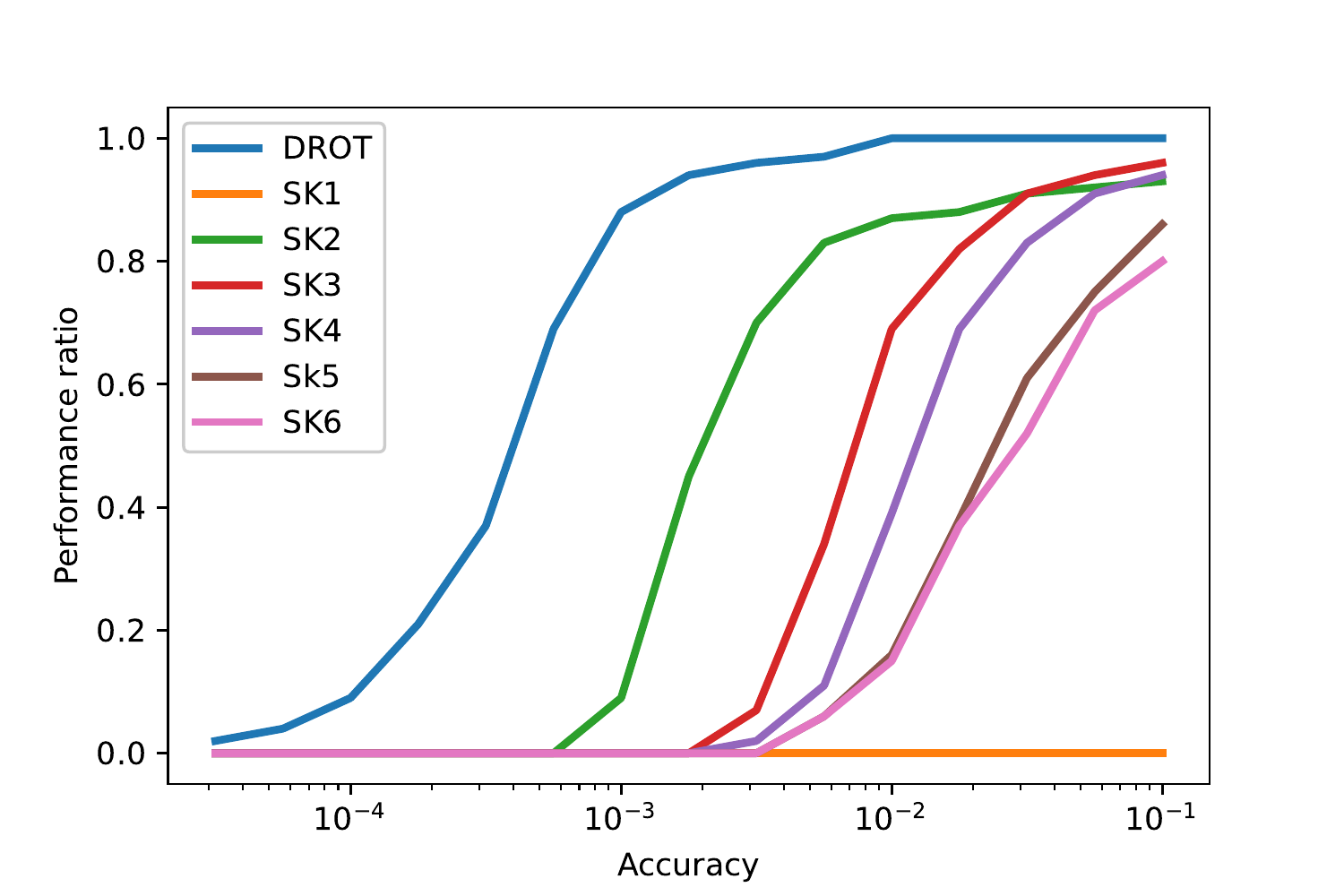}}
        \subcaption{Float 64, $\sigma_{\mathrm{t}}=5$}
    \end{minipage}
 	\hskip-0.2in
    \begin{minipage}{0.35\textwidth}
        \centering
        {\includegraphics[width=1.\textwidth]{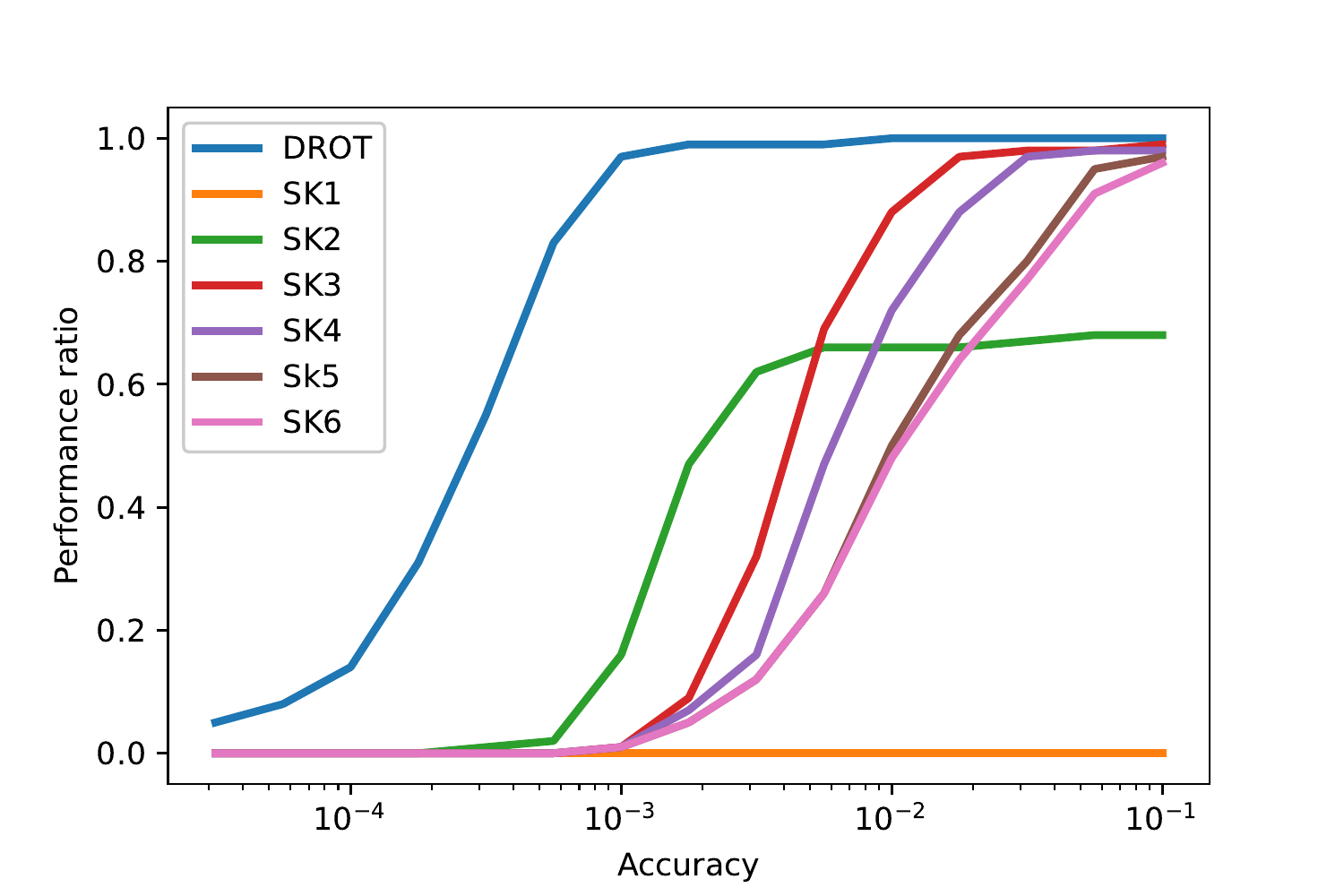}}
        \subcaption{Float 64, $\sigma_{\mathrm{t}}=10$}
    \end{minipage}    
    \caption{The percentage of problems solved up to various accuracy levels for $\sigma_{\mathrm{t}}=5, 10$.}\label{fig:profile}
\end{figure*}
We can see that DROT is consistently more accurate and robust. 
It reinforces that substantial care is needed to select the right $\eta$ for SK.
We can see  that SK is extremely vulnerable in single-precision. Even in
double-precision, an SK instance that seems to work in one setting can run into numerical issues in another.  For example, by slightly changing a statistical properties of the underlying data, nearly $40\%$ of the problems in  Fig.~\ref{fig:profile}(c) cannot be solved by SK2 due to numerical errors, even though it works reasonably well in Fig.~\ref{fig:profile}(b).
\paragraph{Runtime}
As we strive for the excellent per-iteration cost of SK, this work would be incomplete if we do not compare these. Figure~\ref{fig:runtime}(a) shows the median of the per-iteration runtime and the 95\% confidence interval, as a function of the dimensions. Here, $m$ and $n$ range from 100 to 20000 and each plot is obtained by performing 10 runs; in each run, the per-iteration runtime is averaged over 100 iterations. Since evaluating the termination criterion in SK is very expensive, we follow the POT default and only do so once every 10  iterations. We note also that SK calls the highly optimized CuBLAS library to carry out their updates. The result confirms the efficacy of our kernel, and for very large problems the per-iteration runtimes of the two methods are almost identical. Finally, Figs~\ref{fig:runtime}(b)-(c) show the median times required for the total error (the sum of function gap and constraint violation) to reach different $\epsilon$ values. It should be noted that by design, all methods start from different initial points.\footnote{The SK methods have their own matrices $K=\exp(-C/\eta)$ and $X_0 = \diag (u_0) K \diag(v_0)$. For DROT, since $X_{k+1} = [X_k + \phi_k e^\top f \varphi_k^\top - \stepsize C ]_+$, where $X_0=pq^\top=\ones/(mn)$, $\norm{C}_\infty=1$, $\stepsize=\stepsize_0/(m+n)$, $\phi_0=\varphi_0=0$, the term inside $[\cdot]_+$ is of the order $O(1/(mn) - \stepsize_0/(m+n)) \ll 0$. This makes the first few iterations of DROT identically zeros. To keep the canonical $X_0$, we just simply set $\stepsize_0$ to $1/\log(m)$ to reduce the warm-up period as $m$ increases, which is certainly suboptimal for DROT performance.} Therefore, with very large $\epsilon$, this can lead to substantially different results since all methods terminate early. It seems that SK can struggle to find even moderately accurate solutions, especially for large problems. 
This is because for a given $\eta$,  the achievable accuracy of SK scales like $\epsilon=O(\eta\log(n))$, which can become significant as $n$ grows \cite{AWR17}. Moreover, the more data entries available in larger problems can result in the higher chance of getting numerical issues. Note that none of the SK instances can find a solution of accuracy $\epsilon=0.001$.
\begin{figure*}[!t]
    \centering
    \begin{minipage}{0.33\textwidth}
        \centering
        {\includegraphics[width=1\textwidth]{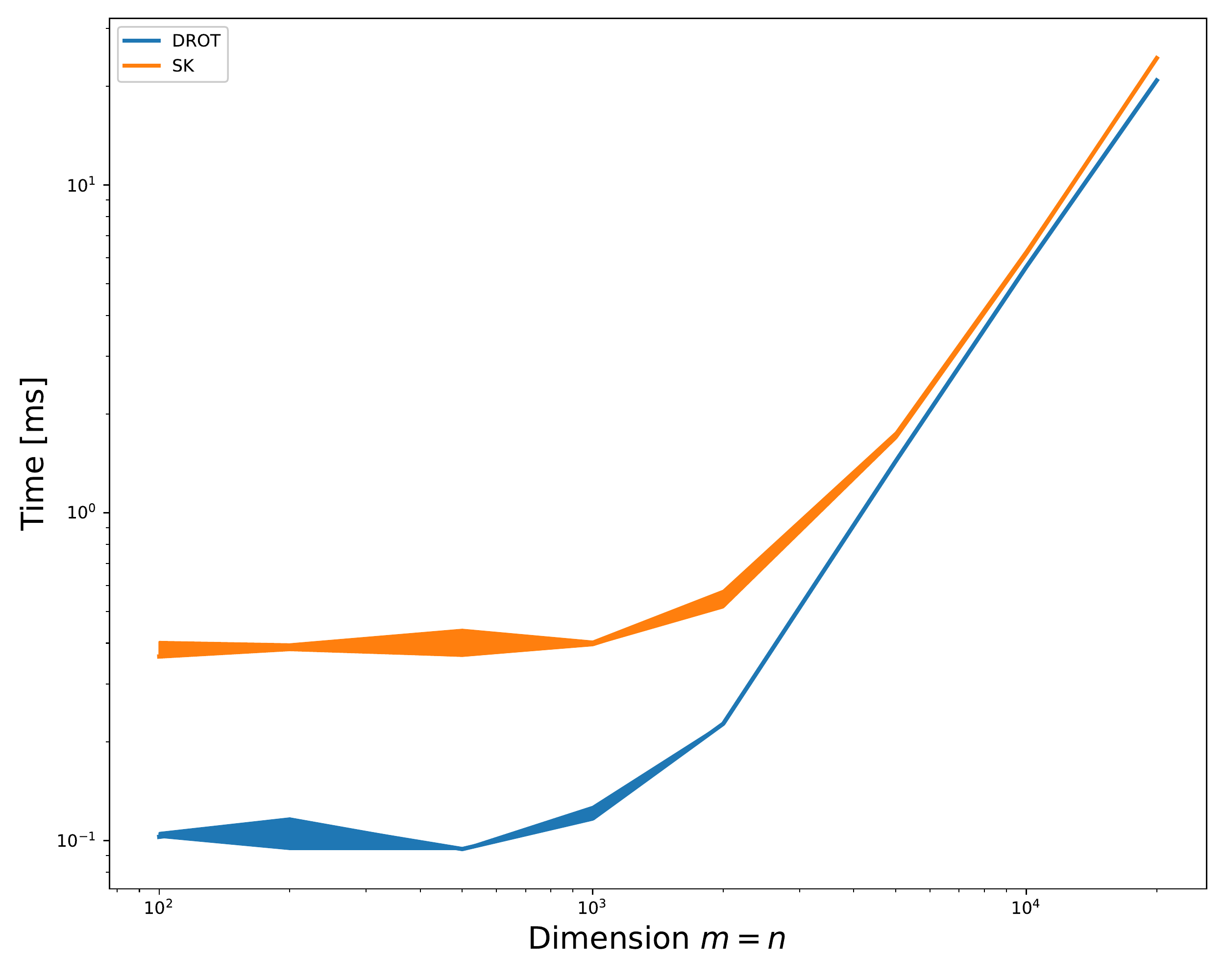}}
        \subcaption{Runtime per iteration}
    \end{minipage}
    \hskip-0.1cm
    \begin{minipage}{0.33\textwidth}
        \centering
        {\includegraphics[width=1.\textwidth]{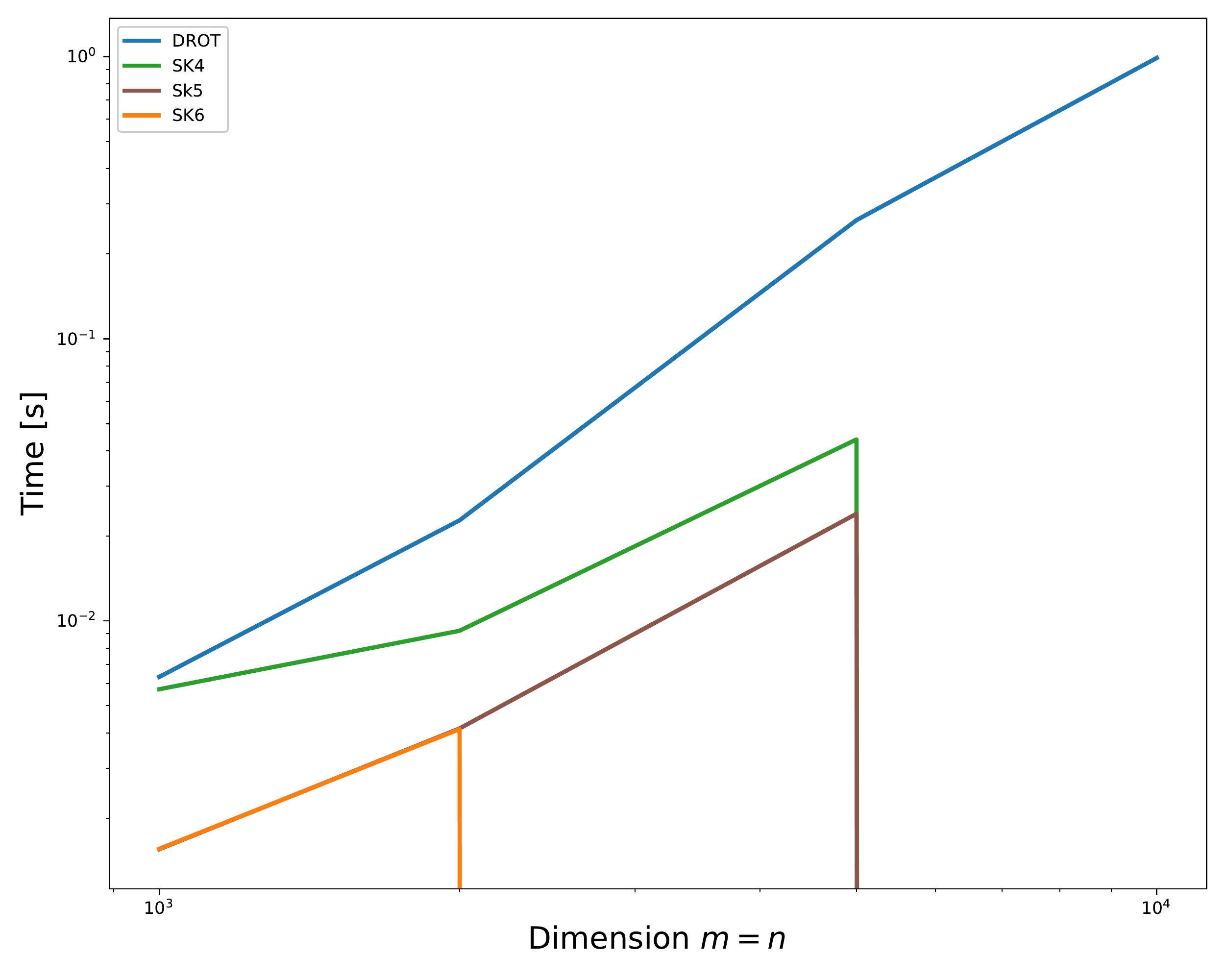}}
        \subcaption{Time to $\epsilon=0.01$}
    \end{minipage}
    \hskip-0.1cm
     \begin{minipage}{0.33\textwidth}
            \centering
            {\includegraphics[width=1.\textwidth]{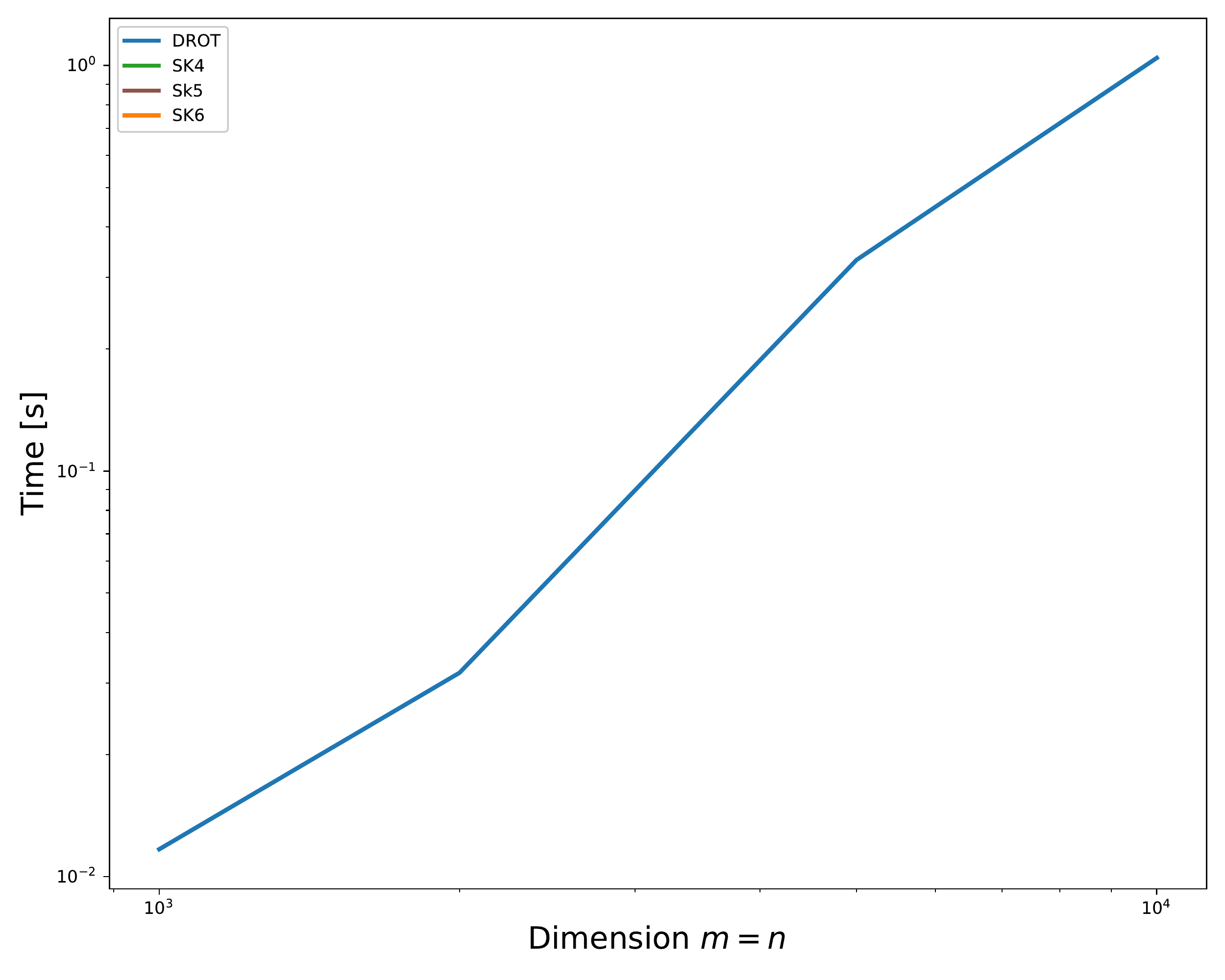}}
            \subcaption{Time to $\epsilon=0.001$}
        \end{minipage}
    \caption{Wall-clock runtime performance versus the dimensions $m=n$ for $\sigma_{\mathrm{t}}=5$.}\label{fig:runtime}
\end{figure*}

\paragraph{Sparsity of transportation} By design, DROT efficiently finds sparse transport plans. To illustrate this, we apply DROT to a color transferring problem between two images (see \cite{BSR18}). By doing so, we obtain a highly sparse plan (approximately $99\%$ zeros), and in turn, a high-quality artificial image, visualized in Figure~~\ref{fig:color-transfer}. In the experiment, we quantize each image with KMeans to reduce the number of distinct colors to $750$. We subsequently use DROT to estimate an optimal color transfer between color distributions of the two images.
\begin{figure*}[!ht]
	\vskip 0.3cm
        \centering
        {\includegraphics[width=.9\textwidth]{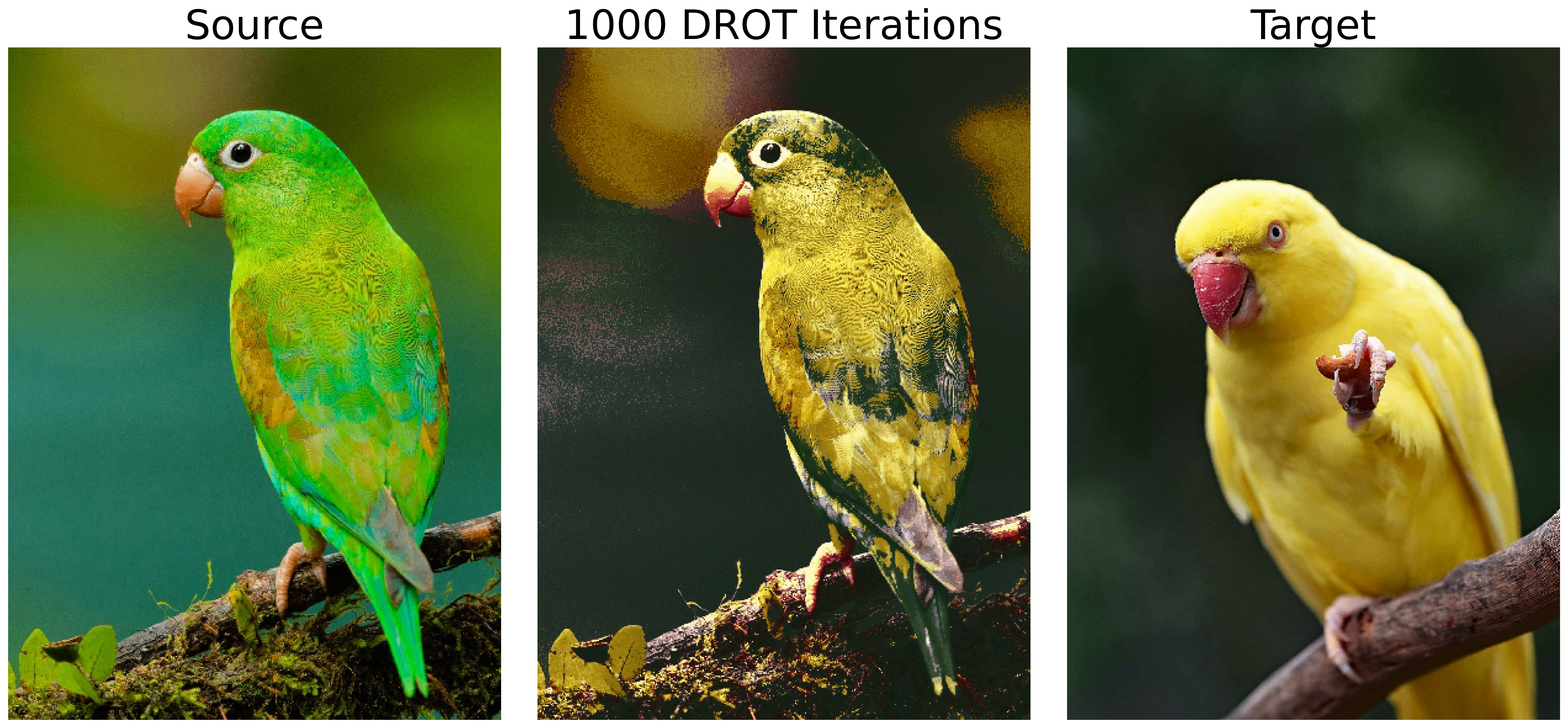}}
    \caption{Color transfer via DROT: The left-most image is a KMeans compressed source image (750 centroids), the right-most is a compressed target image (also obtained via 750 KMeans centroids). The middle panel displays an artificial image generated by mapping the pixel values of each centroid in the source to a weighted mean of the target centroid. The weights are determined by the sparse transportation plan computed via DROT.}\label{fig:color-transfer}
\end{figure*}

\section{Conclusions}
We developed, analyzed, and implemented an operator splitting method (DROT) for
solving the discrete optimal transport problem. Unlike popular Sinkhorn-based
methods, which solve approximate regularized problems, our method tackles the OT
problem directly in its primal form. Each DROT iteration can be executed very
efficiently, in parallel, and our implementation can perform more extensive
computations, including the continuous monitoring of a primal-dual stopping
criterion, with the same per-iteration cost and memory footprint as the Sinkhorn
method.  The net effect is a fast method that can solve OTs to high
accuracy, provide sparse transport plans, and avoid the numerical issues of the
Sinkhorn family. Our algorithm enjoys strong convergence guarantees, including an iteration complexity $O(1/\epsilon)$ compared to $O(1/{\epsilon}^2)$ of the Sinkhorn method and a problem-dependent linear convergence rate.

\section*{Acknowledgement}
This work was supported in part by the Knut and Alice Wallenberg Foundation, the Swedish Research Council and the Swedish Foundation for Strategic Research, and the Wallenberg AI, Autonomous Systems and Software Program (WASP). The computations were enabled by resources provided by the Swedish National Infrastructure for Computing (SNIC), partially funded by the Swedish Research Council through grant agreement no. 2018-05973.

\bibliographystyle{abbrvnat}
\bibliography{refs}

\newpage
\appendix

\section{Proofs of convergence rates}
In order to establish the convergence rates for DROT, we first collect some useful results associated with the mapping that underpins the DR splitting:
\begin{align*}
    T(y) =  y + \prox{\stepsize g}{2 \prox{\stepsize f}{y} -y}-\prox{\stepsize f}{y}.
\end{align*}
 DR corresponds to finding a fixed point to $T$, i.e. a point $y\opt: \, y\opt = T(y\opt)$, or equivalently finding a point in the nullspace of $G$, where $G(y) = y - T(y)$. The operator $T$, and thereby also $G$, are \emph{firmly non-expansive} (see e.g. \cite{LM79}). That is, for all $y, y' \in \text{dom}(T)$:
\begin{align}\label{eq:firm_nonexp}
    \langle T(y) - T(y'), \, y-y' \rangle &\geq \Vert T(y) - T(y') \Vert^2 \nonumber\\
    \langle G(y) - G(y'), \, y-y' \rangle &\geq \Vert G(y) - G(y') \Vert^2.
\end{align}
Let $\mc{G}\opt$ be the set of all fixed points to $T$, i.e. $\mc{G}\opt = \{y \in \text{dom}(T)| \, G(y) = 0 \}$. Then \eqref{eq:firm_nonexp} implies the following bound:

\begin{lemma}\label{lemma:operator-bound}
    Let $y\opt \in \mc{G\opt}$ and $y_{k+1} = T(y_k)$, it holds that
    \begin{align*}
    \Vert y_{k+1}-y\opt \Vert^2 \leq  \Vert y_{k}-y\opt \Vert^2 - \Vert y_{k+1}-y_k \Vert^2. 
    \end{align*}
\end{lemma}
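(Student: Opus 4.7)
The plan is to use the firm non-expansiveness of the residual operator $G$ together with a standard cosine-rule expansion of the squared distance to the fixed point. Since $y\opt$ is a fixed point of $T$, we have $G(y\opt) = y\opt - T(y\opt) = 0$, and by definition of the iteration $G(y_k) = y_k - T(y_k) = y_k - y_{k+1}$. So the second inequality in \eqref{eq:firm_nonexp}, applied at $y=y_k$ and $y'=y\opt$, reduces to
\begin{align*}
    \langle y_k - y_{k+1},\, y_k - y\opt \rangle \geq \|y_k - y_{k+1}\|^2.
\end{align*}

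Next, I would expand the target quantity by writing $y_{k+1}-y\opt = (y_k - y\opt) - (y_k - y_{k+1})$ and squaring:
\begin{align*}
    \|y_{k+1} - y\opt\|^2 = \|y_k - y\opt\|^2 - 2\langle y_k - y_{k+1},\, y_k - y\opt\rangle + \|y_k - y_{k+1}\|^2.
\end{align*}
Substituting the lower bound on the inner product obtained from firm non-expansiveness yields
\begin{align*}
    \|y_{k+1} - y\opt\|^2 \leq \|y_k - y\opt\|^2 - 2\|y_k - y_{k+1}\|^2 + \|y_k - y_{k+1}\|^2,
\end{align*}
which simplifies to the claimed bound.

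There is essentially no obstacle here: the result is a direct, one-line consequence of firm non-expansiveness of $G$ combined with the polarisation identity, and does not require any of the problem-specific structure of OT. The only thing to keep an eye on is that the inequality \eqref{eq:firm_nonexp} for $G$ is indeed the correct one to invoke (rather than the analogous inequality for $T$), since using $T$ directly would produce a factor on the $\|y_{k+1}-y_k\|^2$ term that does not match the statement. Once that choice is made, the rest is routine algebra.
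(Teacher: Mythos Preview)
Your proposal is correct and follows essentially the same argument as the paper: both expand $\|y_{k+1}-y\opt\|^2$ via the cosine rule, invoke the firm non-expansiveness inequality \eqref{eq:firm_nonexp} for $G$ with $G(y\opt)=0$ and $G(y_k)=y_k-y_{k+1}$, and then simplify. Your remark about choosing the inequality for $G$ rather than $T$ is exactly the key observation the paper uses as well.
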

\begin{proof}
We have
\begin{align*}
    \Vert y_{k+1}-y\opt \Vert^2 &= \Vert y_{k+1}-y_k + y_k-y\opt \Vert^2 \\
                                &= \Vert y_k-y\opt \Vert^2 + 2 \InP{ y_{k+1}-y_k}{y_k-y\opt} + \Vert y_k-y\opt \Vert^2 \\
                                &= \Vert y_k-y\opt \Vert^2 - 2 \InP{ G(y_{k})-G(y\opt)}{y_k-y\opt} + \Vert y_k-y\opt \Vert^2 \\
                                &\leq  \Vert y_k-y\opt \Vert^2 - 2 \Vert  G(y_{k})-G(y\opt)\Vert^2 + \Vert y_k-y\opt \Vert^2 \\
                                &=\Vert y_{k}-y\opt \Vert^2 - \Vert y_{k+1}-y_k \Vert^2.
\end{align*}
\end{proof}
\begin{corollary}\label{coro:y-inequalities}
Lemma~\ref{lemma:operator-bound} implies that
    \begin{subequations}
        \begin{align}\label{coro:y:a}
            \Vert y_{k+1}-y_k \Vert &\leq \Vert y_{k}-y\opt \Vert \\ \label{coro:y:b}
             (\dist(y_{k+1}, \mc{G}\opt))^2 &\leq (\dist(y_{k}, \mc{G}\opt))^2 - \Vert y_{k+1}-y_k \Vert^2.
        \end{align}
    \end{subequations}
\end{corollary}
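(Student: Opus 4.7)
\textbf{Proof plan for Corollary~\ref{coro:y-inequalities}.} Both inequalities are direct consequences of Lemma~\ref{lemma:operator-bound}, which gives
\[
\Vert y_{k+1}-y\opt\Vert^2 \leq \Vert y_k - y\opt\Vert^2 - \Vert y_{k+1}-y_k\Vert^2
\]
for every $y\opt \in \mc{G}\opt$. The plan is to exploit this bound in two different ways, once by discarding the nonnegative term on the left and once by choosing $y\opt$ optimally.

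For \eqref{coro:y:a}, I would pick an arbitrary $y\opt\in\mc{G}\opt$ (which is nonempty by assumption, since DR has at least one fixed point in our setting). Since $\Vert y_{k+1}-y\opt\Vert^2 \geq 0$, rearranging the lemma yields $\Vert y_{k+1}-y_k\Vert^2 \leq \Vert y_k-y\opt\Vert^2$, and taking square roots gives the claimed bound. In particular, this holds when $y\opt$ is the point in $\mc{G}\opt$ closest to $y_k$, so $\Vert y_{k+1}-y_k\Vert \leq \dist(y_k,\mc{G}\opt)$ as well.

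For \eqref{coro:y:b}, I would instantiate Lemma~\ref{lemma:operator-bound} at the particular fixed point $\bar y\opt := \proj{\mc{G}\opt}{y_k}$, so that $\Vert y_k-\bar y\opt\Vert^2 = (\dist(y_k,\mc{G}\opt))^2$. Since $\bar y\opt\in\mc{G}\opt$, the definition of distance gives $\dist(y_{k+1},\mc{G}\opt)^2 \leq \Vert y_{k+1}-\bar y\opt\Vert^2$, and combining these with the lemma produces exactly \eqref{coro:y:b}. The projection onto $\mc{G}\opt$ is well-defined because $\mc{G}\opt$, being the fixed-point set of the firmly nonexpansive map $T$, is closed and convex.

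There is no real obstacle here: the entire argument is a two-line manipulation once Lemma~\ref{lemma:operator-bound} is in hand and one remembers to pick $y\opt$ as the nearest fixed point in order to turn the $y\opt$-dependent bound into one on $\dist(y_k,\mc{G}\opt)$.
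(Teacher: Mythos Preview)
Your proposal is correct and matches the paper's proof essentially line for line: the paper derives \eqref{coro:y:a} from the nonnegativity of the left-hand side of Lemma~\ref{lemma:operator-bound}, and obtains \eqref{coro:y:b} by bounding $(\dist(y_{k+1},\mc{G}\opt))^2 \leq \Vert y_{k+1}-\proj{\mc{G}\opt}{y_k}\Vert^2$ and then applying the lemma with $y\opt=\proj{\mc{G}\opt}{y_k}$. Your extra remark that $\mc{G}\opt$ is closed and convex (so the projection is well-defined) is a welcome clarification the paper leaves implicit.
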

\begin{proof}
The first inequality follows directly from the non-negativity of the left-hand side of Lemma~\ref{lemma:operator-bound}. The latter inequality follows from:
\begin{align*}
(\dist(y_{k+1}, \mc{G}\opt))^2= \Vert y_{k+1}-\proj{\mc{G}\opt}{y_{k+1}} \Vert^2 \leq \Vert y_{k+1}-\proj{\mc{G}\opt}{y_{k}} \Vert^2. 
\end{align*}
Applying Lemma~\ref{lemma:operator-bound} to the right-hand side with $y\opt =\proj{\mc{G}\opt}{y_{k}} $ yields the desired result.
\end{proof}

\subsection{Proof of Theorem~\ref{thm:drot:sublinear:rate}}
Since DR is equivalent to ADMM up to changes of variables and swapping of the iteration order, it is expected that DR can attain the ergodic rate $O(1/k)$ of ADMM, both for the objective gap and the constraint violation \citep{HY12, Bec17}. However, mapping the convergence proof of ADMM to a specific instance of DR is tedious and error-prone. We thus give a direct proof here instead. We first recall the DROT method:
\begin{subequations}\label{thm:drot:sublinear:rate:proof:eq:drot:general:form}
    \begin{align}
        X_{k+1} &= \big[Y_k - \stepsize C\big]_{+}\\
        Z_{k+1} &= \proj{\mc{X}}{2X_{k+1} - Y_k}  \\
        Y_{k+1} &= Y_k + Z_{k+1}- X_{k+1}.
    \end{align}
\end{subequations}
Since $X_{k+1}$ is the projection of $Y_k - \stepsize C$ onto $\R_{+}^{m \times n}$ , it holds that
\begin{align}\label{thm:drot:sublinear:rate:proof:eq:optcond:x}
    \InP{X_{k+1} - Y_k + \stepsize C}{X - X_{k+1}} \geq 0  \quad \forall X \in \R_+^{m\times n}.
\end{align}
Also, since $\mc{X}$ is a closed affine subspace, we have
\begin{align}\label{thm:drot:sublinear:rate:proof:eq:optcond:z}
    \InP{Z_{k+1} - 2X_{k+1} + Y_k}{Z - Z_{k+1}} = 0  \quad \forall Z \in \mc{X}.
\end{align}
Next, for $X\in \R_{+}^{m\times n}$, $Z\in\mc{X}$ and $U\in \R^{m\times n}$ we define the function:
\begin{align*}
    V(X, Z, U) = f(X) + g(Z) + \InP{U}{Z-X}.
\end{align*}
Let $U_{k+1} = (Y_k - X_{k+1})/\stepsize$, it holds that
\begin{align*}
    &V(X_{k+1}, Z_{k+1}, U_{k+1}) -  V(X, Z, U_{k+1})
    \nonumber\\
     &\hspace{1cm}= f(X_{k+1}) + g(Z_{k+1}) + \InP{U_{k+1}}{Z_{k+1}-X_{k+1}} 
    - f(X) - g(Z) - \InP{U_{k+1}}{Z-X}
    \nonumber\\
    &\hspace{1cm}= \InP{C}{X_{k+1}} - \InP{C}{X} + \frac{1}{\stepsize}\InP{Y_k - X_{k+1}}{Z_{k+1}- Z + X - X_{k+1}}.
\end{align*}
By \eqref{thm:drot:sublinear:rate:proof:eq:optcond:x}, we have
$\InP{Y_k - X_{k+1}}{ X - X_{k+1}}/\stepsize \leq \InP{C}{X} -  \InP{C}{X_{k+1}},$ we thus arrive at
\begin{align}\label{thm:drot:sublinear:rate:proof:eq:Vdiff:1}
	&V(X_{k+1}, Z_{k+1}, U_{k+1}) -  V(X, Z, U_{k+1})
	\leq 
	\frac{1}{\stepsize} \InP{Y_k - X_{k+1}}{Z_{k+1}- Z}
	\nonumber\\
	&=  	\frac{1}{\stepsize} \InP{X_{k+1} - Z_{k+1}}{Z_{k+1}- Z} 
	= - \frac{1}{\stepsize}\norm{X_{k+1} - Z_{k+1}}^2 + \frac{1}{\stepsize} \InP{X_{k+1} - Z_{k+1}}{X_{k+1}- Z},
\end{align}
where the second step follows from \eqref{thm:drot:sublinear:rate:proof:eq:optcond:z}.
Now, for any $Y\in \R^{m\times n}$, we also have
\begin{align}\label{thm:drot:sublinear:rate:proof:eq:Vdiff:2}
	V(X_{k+1}, Z_{k+1}, \frac{Y-Z}{\stepsize}) &-  V(X_{k+1}, Z_{k+1}, U_{k+1})
	\nonumber\\
	&=  	\frac{1}{\stepsize} \InP{Y-Z}{Z_{k+1} - X_{k+1}} 
		- \frac{1}{\stepsize} \InP{Y_k-X_{k+1}}{Z_{k+1} - X_{k+1}}
	\nonumber\\
	&= \frac{1}{\stepsize} \InP{Y-Y_k}{Z_{k+1} - X_{k+1}} 
			+ \frac{1}{\stepsize} \InP{X_{k+1}-Z}{Z_{k+1} - X_{k+1}}.
\end{align}
Therefore, by adding both sides of \eqref{thm:drot:sublinear:rate:proof:eq:Vdiff:1} and \eqref{thm:drot:sublinear:rate:proof:eq:Vdiff:2}, we obtain
 \begin{align}\label{thm:drot:sublinear:rate:proof:eq:Vdiff:3}
 	V(X_{k+1}, Z_{k+1}, \frac{Y-Z}{\stepsize}) &-  V(X, Z, U_{k+1})
 	\leq 
 	\frac{1}{\stepsize} \InP{Y-Y_k}{Z_{k+1} - X_{k+1}}
 	- \frac{1}{\stepsize}\norm{X_{k+1} - Z_{k+1}}^2.
 \end{align}
Since $Z_{k+1}-X_{k+1}=Y_{k+1}-Y_k$, it holds that
\begin{align}\label{thm:drot:sublinear:rate:proof:eq:Vdiff:4}
	\frac{1}{\stepsize} \InP{Y-Y_k}{Z_{k+1} - X_{k+1}} 
	= \frac{1}{2\stepsize} \norm{Y_k -Y}^2 - \frac{1}{2\stepsize} \norm{Y_{k+1} -Y}^2
	+ \frac{1}{2\stepsize} \norm{Y_{k+1} -Y_k}^2.
\end{align}
Plugging \eqref{thm:drot:sublinear:rate:proof:eq:Vdiff:4} into \eqref{thm:drot:sublinear:rate:proof:eq:Vdiff:3} yields
\begin{align}\label{thm:drot:sublinear:rate:proof:eq:Vdiff:5}
	V(X_{k+1}, Z_{k+1}, \frac{Y-Z}{\stepsize}) -  V(X, Z, U_{k+1})
	\leq 
	\frac{1}{2\stepsize} \norm{Y_k -Y}^2 - \frac{1}{2\stepsize} \norm{Y_{k+1} -Y}^2
		- \frac{1}{2\stepsize} \norm{Y_{k+1} -Y_k}^2,
\end{align}
which by dropping the last term on the right-hand side gives the first claim in the theorem.

Let $Y\opt$ be a fixed-point of the mapping $T$ and let $X\opt$ be a solution of \eqref{eq:ot:dist} defined from $Y\opt$ in the manner of Lemma~\ref{lem:dr:convergence}. Invoking \eqref{thm:drot:sublinear:rate:proof:eq:Vdiff:5} with $Z=X=X\opt\in \R_+^{m\times n}\cap \mc{X} $ and summing both sides of \eqref{thm:drot:sublinear:rate:proof:eq:Vdiff:5} over the iterations $0, \ldots, k-1$ gives
\begin{align}\label{thm:drot:sublinear:rate:proof:eq:Vdiff:6}
	\InP{C}{\bar{X}_k} - \InP{C}{X\opt} + \InP{\frac{Y-X\opt}{\stepsize}}{\bar{Z}_k-\bar{X}_k}
	\leq 
	\frac{1}{k}\left(\frac{1}{2\stepsize} \norm{Y_0 -Y}^2 - \frac{1}{2\stepsize} \norm{Y_{k} -Y}^2\right),
\end{align}
where $\bar{X}_k = \sum_{i=1}^{k}{X_i}/k$ and $\bar{Z}_k = \sum_{i=1}^{k}{Z_i}/k$. Since
\begin{align*}
\frac{1}{2\stepsize} \norm{Y_0 -Y}^2 - \frac{1}{2\stepsize} \norm{Y_k -Y}^2
	=
	\frac{1}{2\stepsize} \norm{Y_0}^2 - \frac{1}{2\stepsize} \norm{Y_{k}}^2 
	+ \frac{1}{\stepsize} \InP{Y}{Y_k - Y_0},
\end{align*}
it follows that
\begin{align*}
	\InP{C}{\bar{X}_k} - \InP{C}{X\opt} + \frac{1}{\stepsize}\InP{Y}{\bar{Z}_k-\bar{X}_k - \frac{Y_k-Y_0}{k}}
	\leq 
		\frac{1}{2\stepsize k } \norm{Y_0}^2 - \frac{1}{2\stepsize k } \norm{Y_{k}}^2
	+
		\frac{1}{\stepsize}\InP{X\opt}{\bar{Z}_k-\bar{X}_k}.
\end{align*}
Since $Y$ is arbitrary in the preceding inequality, we must have that
\begin{align}\label{thm:drot:sublinear:rate:proof:eq:constraint:violation}
	\bar{Z}_k-\bar{X}_k - \frac{Y_k-Y_0}{k}=0,
\end{align}
from which we deduce that
\begin{align}\label{thm:drot:sublinear:rate:proof:eq:Vdiff:8}
	\InP{C}{\bar{X}_k} - \InP{C}{X\opt} 
	&\leq 
		\frac{1}{2\stepsize k } \norm{Y_0}^2 - \frac{1}{2\stepsize k } \norm{Y_{k}}^2
		+
		\frac{1}{\stepsize}\InP{X\opt}{\bar{Z}_k-\bar{X}_k}
	\nonumber\\
	&\leq 
		\frac{1}{2\stepsize k } \norm{Y_0}^2 
		+ 
		\frac{1}{\stepsize} \norm{X\opt} \norm{\bar{Z}_k-\bar{X}_k}
	\nonumber\\
	&=
		\frac{1}{2\stepsize k } \norm{Y_0}^2 
		+ 
		\frac{1}{\stepsize k} \norm{X\opt} \norm{Y_k-Y_0}.	 	
\end{align}
By Lemma~\ref{lemma:operator-bound}, we have 
\begin{align*}
	\norm{Y_k - Y\opt}^2 \leq \norm{Y_{k-1} - Y\opt}^2 \leq \cdots \leq \norm{Y_0 - Y\opt}^2,
\end{align*}
which implies that
\begin{align} \label{thm:drot:sublinear:rate:proof:eq:ydiff}
	\norm{Y_k - Y_0} \leq \norm{Y_k - Y\opt} + \norm{Y_0 - Y\opt}
	\leq 2 \norm{Y_0 - Y\opt}.
\end{align}
Finally, plugging \eqref{thm:drot:sublinear:rate:proof:eq:ydiff} into   \eqref{thm:drot:sublinear:rate:proof:eq:Vdiff:8} and \eqref{thm:drot:sublinear:rate:proof:eq:constraint:violation} yields
\begin{align*}
	\InP{C}{\bar{X}_k} - \InP{C}{X\opt} 
	&\leq 
		\frac{1}{k}\left(
			\frac{1}{2\stepsize } \norm{Y_0}^2 
			+ 
			\frac{2}{\stepsize } \norm{X\opt} \norm{Y_0-Y\opt}\right)
	\nonumber\\
	\norm{\bar{Z}_k-\bar{X}_k}
	&= \frac{\norm{Y_k-Y_0}}{k}
	\leq \frac{2\norm{Y_0-Y\opt}}{k},
\end{align*}
which concludes the proof.


\subsection{Proof of Theorem~\ref{thm:drot:linear:rate}}
We will employ a similar technique for proving linear convergence as \cite{wang2017new} used for ADMM on LPs. Although they, in contrast to our approach, handled the linear constraints via relaxation, 
we show that the linear convergence also holds for our OT-customized splitting algorithm which relies on polytope projections. The main difference is that the dual variables are given implicitly, rather than explicitly via 
the multipliers defined in the ADMM framework. As a consequence, their proof needs to be adjusted to apply to our proposed algorithm. 

To facilitate the analysis of the given splitting algorithm, we let $e=\ones_n$ and $f=\ones_m$ and consider the following equivalent LP:
\begin{align}\label{eq:ot:dist:equiv}
    \begin{array}{ll}
        \underset{X, Z \in\R^{m\times n}}{\minimize} & \InP{C}{X}\\
        \subjectto & Z e = p \\
                   & Z^\top f = q \\
                   & X \geq 0 \\
                   & Z=X.
    \end{array}
\end{align} 
The optimality conditions can be written on the form
\begin{subequations}\label{eq:opt:cond}
    \begin{align}
     Z\opt e &= p\label{eq:opt:cond:p}\\
     {Z\opt}^\top f &= q \label{eq:opt:cond:q}\\
     X\opt &\geq 0 \label{eq:opt:cond:xpos}\\
     Z\opt&=X\opt \label{eq:opt:cond:xz}\\
    \mu\opt e^\top + f {\nu\opt}^\top &\leq C \label{eq:opt:cond:dual:feas}\\
    \InP{C}{X\opt}- p^\top \mu\opt - q^\top \nu\opt &= 0.\label{eq:opt:cond:strong:duality}
\end{align}
\end{subequations}
This means that set of optimal solutions, $\mc{S}\opt$, is a polyhedron
\begin{align*}
    \mc{S}\opt = \{ \mc{Z} = (X, Z, \mu, \nu) \;\vert\; M_{\rm eq}(\mc{Z}) = 0,\; M_{\rm in}(\mc{Z}) \leq 0 \}.
\end{align*}
where $M_{\rm eq}$ and $M_{\rm in}$ denote the equality and inequality constraints of \eqref{eq:opt:cond}, respectively. \citet{li1994sharp} proposed a uniform bound of the distance between a point such a polyhedron in terms of the constraint violation given by:
\begin{align}\label{eq:hoffbound}
    \dist(\mc{Z}, \mc{S}\opt) \leq \theta_{\mc{S}\opt} \left\Vert
    \begin{array}{c}
            M_{\rm eq}(\mc{Z})  \\
            \big[M_{\rm in}(\mc{Z})\big]_+
    \end{array} \right\Vert.
\end{align}
For the $(X,Z)$-variables generated by \eqref{eq:drot:general:form}, \eqref{eq:opt:cond:p}, \eqref{eq:opt:cond:q}, and \eqref{eq:opt:cond:xpos} will always hold due to the projections carried out in the subproblems. As a consequence, only \eqref{eq:opt:cond:xz}, \eqref{eq:opt:cond:dual:feas}, and \eqref{eq:opt:cond:strong:duality} will contribute to the bound \eqref{eq:hoffbound}. In particular:
\begin{align}\label{eq:hoffbound:explicit}
    \dist((X_{k}, Z_k, \mu_k, \nu_k), \mc{S}\opt) \leq \theta_{\mc{S}\opt} \left\Vert
    \begin{array}{c}
            Z_k-X_k  \\
            \InP{C}{X_k}- p^\top \mu_k - q^\top \nu_k \\
            \big[\mu_k e^\top + f \nu_k^\top-C\big]_+
    \end{array} \right\Vert.
\end{align}
The following Lemma allows us to express the right-hand side of \eqref{eq:hoffbound:explicit} in terms of $Y_{k+1}-Y_{k}$.
\begin{lemma}\label{lemma:var:repr}
Let $X_k$, $Z_k$ and $Y_k$ be a sequence generated by \eqref{eq:drot:general:form}. It then holds that
\begin{align*}
    Z_{k+1}-X_{k+1}  &= Y_{k+1}-Y_k \\
    \mu_{k+1} e^\top + f \nu_{k+1}^\top - C &\leq  \stepsize^{-1}(Y_{k+1}- Y_k) \\
    \InP{C}{X_{k+1}}-p^\top\mu_{k+1}-q^\top \nu_{k+1} &=  -\stepsize^{-1} \InP{Y_{k+1}}{Y_{k+1} -  Y_k}. 
\end{align*}
\end{lemma}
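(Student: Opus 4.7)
The plan is as follows. The first identity is immediate: rearranging the $Y$-update from \eqref{thm:drot:sublinear:rate:proof:eq:drot:general:form} gives $Y_{k+1} - Y_k = Z_{k+1} - X_{k+1}$. For the remaining two claims, the first task is to fix what the implicit dual pair $(\mu_{k+1}, \nu_{k+1})$ is. Following the stopping-criterion discussion in the main text, these are identified with $\phi_{k+1}/\stepsize$ and $\varphi_{k+1}/\stepsize$, which is equivalent to the rank-two representation $\mu_{k+1}e^\top + f\nu_{k+1}^\top = \stepsize^{-1}(Y_{k+1}-X_{k+1})$. This identification is internally consistent because the projection $Z_{k+1} = \proj{\mc{X}}{2X_{k+1}-Y_k}$ forces $2X_{k+1} - Y_k - Z_{k+1}$ into the normal cone of the affine subspace $\mc{X}$, which is exactly $\mathrm{ran}\,\mc{A}^*$, i.e.\ the span of matrices of the form $\mu e^\top + f\nu^\top$; combining with $Y_{k+1} = Y_k + Z_{k+1} - X_{k+1}$ shows that $Y_{k+1} - X_{k+1}$ has the same rank-two form, and its coefficients define $(\mu_{k+1},\nu_{k+1})$.

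For the second claim (dual feasibility), the idea is to use the entrywise characterization of $X_{k+1} = [Y_k - \stepsize C]_+$: entrywise $X_{k+1} \geq Y_k - \stepsize C$, or equivalently $C \geq \stepsize^{-1}(Y_k - X_{k+1})$. Plugging this and the identification $\mu_{k+1}e^\top + f\nu_{k+1}^\top = \stepsize^{-1}(Y_{k+1} - X_{k+1})$ into the left-hand side of the inequality immediately yields the bound $\stepsize^{-1}(Y_{k+1} - Y_k)$ on the right.

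For the third claim (duality gap), I plan to combine two facts. First, complementary slackness for the projection onto $\R_+^{m\times n}$ gives $\InP{X_{k+1}}{X_{k+1} - (Y_k - \stepsize C)} = 0$, hence $\InP{C}{X_{k+1}} = \stepsize^{-1}\InP{X_{k+1}}{Y_k - X_{k+1}}$. Second, since $Z_{k+1}\in\mc{X}$ satisfies $Z_{k+1}e = p$ and $Z_{k+1}^\top f = q$, an elementary trace computation gives $\InP{Z_{k+1}}{\mu_{k+1}e^\top + f\nu_{k+1}^\top} = p^\top\mu_{k+1} + q^\top\nu_{k+1}$, and therefore $p^\top\mu_{k+1} + q^\top\nu_{k+1} = \stepsize^{-1}\InP{Z_{k+1}}{Y_{k+1} - X_{k+1}}$. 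Subtracting these two expressions and substituting $Z_{k+1} = X_{k+1} + (Y_{k+1} - Y_k)$ from the first claim expands into inner products whose $X_{k+1}$-cross-terms cancel, leaving exactly $-\stepsize^{-1}\InP{Y_{k+1}}{Y_{k+1} - Y_k}$.

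The main obstacle is none of the steps in isolation but rather committing to a definition of the implicit duals that is simultaneously consistent with all three right-hand sides; once the rank-two representation $Y_{k+1}-X_{k+1} = \stepsize(\mu_{k+1}e^\top + f\nu_{k+1}^\top)$ is in place, the remaining work is routine algebra built on two elementary facts about the projection onto $\R_+^{m\times n}$ (entrywise dominance and complementary slackness) together with the row/column-sum identities of $Z_{k+1}\in\mc{X}$.
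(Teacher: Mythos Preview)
Your proposal is correct and follows essentially the same approach as the paper: the same identification $Y_{k+1}-X_{k+1}=\stepsize(\mu_{k+1}e^\top+f\nu_{k+1}^\top)$, the same complementary-slackness identity for $[\,\cdot\,]_+$, and the same trace computation $\InP{Z_{k+1}}{\mu_{k+1}e^\top+f\nu_{k+1}^\top}=p^\top\mu_{k+1}+q^\top\nu_{k+1}$. Your derivation of the second claim is in fact slightly more direct than the paper's---you use the entrywise bound $X_{k+1}\geq Y_k-\stepsize C$ immediately, whereas the paper routes through the equivalent representation $Y_{k+1}-Z_{k+1}-\stepsize C=\min(Y_k-\stepsize C,0)$ before reaching the same inequality---but the two arguments are equivalent.
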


\begin{proof}

The first equality follows directly from the $Y$-update in~\eqref{eq:drot:general:form}. Let
\begin{align}\label{eq:dual:var}
    \mu_{k} = \phi_{k}/\stepsize, \quad \nu_{k} = \varphi_{k}/\stepsize
\end{align}
the $Y$-update in \eqref{alg:drot:y:update} yields
\begin{align}\label{eqn:yproj_update}
    Y_{k+1} &= Y_k + Z_{k+1}- X_{k+1} = X_{k+1}+ \rho \mu_{k+1}e^{\top} + \rho f \nu_{k+1}^{\top}.
\end{align}
In addition, the $X$-update
\begin{align}\label{eq:Xbound}
    X_{k+1} &= \max(Y_k - \stepsize C, 0) 
\end{align}
can be combined with the first equality in \eqref{eqn:yproj_update} to obtain
\begin{align*}
    Y_{k+1} &= \min(Y_k+Z_{k+1}, Z_{k+1} +\stepsize C),
\end{align*}
or equivalently
\begin{align}\label{eq:dual:feas:bound0}
    Y_{k+1}-Z_{k+1}-\stepsize C = \min(Y_k-\stepsize C, 0).
\end{align}
By \eqref{eqn:yproj_update}, the left-hand-side in \eqref{eq:dual:feas:bound0} can be written as
\begin{align*}
    Y_{k+1}-Z_{k+1}-\stepsize C &= X_{k+1}-Z_{k+1} + \stepsize \mu_{k+1}e^{\top} + \stepsize f \nu_{k+1}^{\top} - \stepsize C \\
    &= \stepsize( \mu_{k+1}e^{\top} + f \nu_{k+1}^{\top} -  C - \stepsize^{-1}(Y_{k+1}-Y_k)).
\end{align*}
Substituting this in \eqref{eq:dual:feas:bound0} gives
\begin{align}\label{eq:dual:feas:bound}
    \mu_{k+1}e^{\top} + f \nu_{k+1}^{\top} -  C - \stepsize^{-1}(Y_{k+1}-Y_k)) = \stepsize^{-1} \min(Y_k-\rho C, 0) \leq 0,
\end{align}
which gives the second relation in the lemma. Moreover, \eqref{eq:Xbound} and \eqref{eq:dual:feas:bound} implies that $X_{k+1}$ and $\mu_{k+1} e^\top + f \nu_{k+1}^\top - \stepsize^{-1}(Y_{k+1}- Y_k) - C$ cannot be nonzero simultaneously. As a consequence,
\begin{align*}
    \InP{X_{k+1}}{\mu_{k+1} e^\top + f \nu_{k+1}^\top - \stepsize^{-1}(Y_{k+1}- Y_k) - C} = 0
\end{align*}
or
\begin{align*}
    \InP{C}{X_{k+1}} =& 
    \InP{X_{k+1}}{\mu_{k+1} e^\top + f \nu_{k+1}^\top - \stepsize^{-1}(Y_{k+1}- Y_k)} \nonumber \\
    =&\InP{Z_{k+1}-(Y_{k+1}-Y_k)}{\mu_{k+1} e^\top + f \nu_{k+1}^\top - \stepsize^{-1}(Y_{k+1}- Y_k)} \nonumber\\
    =& \InP{Z_{k+1}}{\mu_{k+1} e^\top+f \nu_{k+1}^\top} - \InP{\stepsize^{-1}Z_{k+1}+\mu_{k+1} e^\top+ f \nu_{k+1}^\top}{Y_{k+1}-Y_k} \nonumber \\
    &+\stepsize^{-1} \norm{Y_{k+1}-Y_k}^2. \nonumber 
\end{align*}
By \eqref{eqn:yproj_update}, $\stepsize^{-1}Z_{k+1}+\mu_{k+1} e^\top+ f \nu_{k+1}^\top = \stepsize^{-1}(Z_{k+1} + Y_{k+1} - X_{k+1}) =  \stepsize^{-1}(2 Y_{k+1}-Y_k)$, hence
\begin{align}\label{eq:primal_value}
    \InP{C}{X_{k+1}} =&\InP{Z_{k+1}}{\mu_{k+1} e^\top+f \nu_{k+1}^\top} - \rho^{-1}\InP{2 Y_{k+1}-Y_k}{Y_{k+1}-Y_k} 
    +\stepsize^{-1} \norm{Y_{k+1}-Y_k}^2 \nonumber \\
    =&\InP{Z_{k+1}}{\mu_{k+1} e^\top+f \nu_{k+1}^\top} - \rho^{-1}\InP{ Y_{k+1}}{Y_{k+1}-Y_k}.
\end{align}
Note that
\begin{align}\label{eq:dual_value}
    \InP{Z_{k+1}}{\mu_{k+1} e^\top+ f \nu_{k+1}^\top} &= \tr(Z_{k+1}^\top\mu_{k+1} e^\top) + \tr(Z_{k+1}^\top f \nu_{k+1}^\top) \nonumber \\
    &= \tr(e^\top Z_{k+1}^\top\mu_{k+1}) + \tr(\nu_{k+1}^\top Z_{k+1}^\top f) \nonumber \\
    &= \tr(p^\top \mu_{k+1}) + \tr(\nu_{k+1}^\top q) \nonumber \\
    &= p^\top \mu_{k+1} + q^\top \nu_{k+1}.
\end{align}
Substituting \eqref{eq:dual_value} into \eqref{eq:primal_value}, we get the third equality of the lemma:
\begin{align*}
   \InP{C}{X_{k+1}}-p^\top \mu_{k+1} + q^\top \nu_{k+1} &= -\stepsize^{-1}\InP{Y_{k+1}}{Y_{k+1}-Y_k}. 
\end{align*}
\end{proof}
By combining Lemma~\ref{lemma:var:repr} and \eqref{eq:hoffbound:explicit}, we obtain the following result:
\begin{lemma}\label{lemma:Y-hoff-bound}
    Let $X_k$, $Z_k$ and $Y_k$ be a sequence generated by
    \eqref{eq:drot:general:form} and $\mu_k$, $\nu_k$ be defined by
    \eqref{eq:dual:var}. Then, there is a $\gamma > 0$ such that:
    \begin{align*}
        \dist((X_k, Z_k, \mu_k, \nu_k), \mc{S}\opt) \leq \gamma \norm{Y_{k}- Y_{k-1}}.
    \end{align*}
\end{lemma}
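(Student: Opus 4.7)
The plan is to combine the three identities in Lemma~\ref{lemma:var:repr} (shifted by one index, i.e.\ applied with $k+1 \mapsto k$) with the Hoffman-type bound \eqref{eq:hoffbound:explicit}. After the shift, Lemma~\ref{lemma:var:repr} reads $Z_k - X_k = Y_k - Y_{k-1}$, $\mu_k e^\top + f\nu_k^\top - C \leq \stepsize^{-1}(Y_k - Y_{k-1})$, and $\InP{C}{X_k} - p^\top \mu_k - q^\top \nu_k = -\stepsize^{-1}\InP{Y_k}{Y_k - Y_{k-1}}$, so each component of the residual vector appearing on the right-hand side of \eqref{eq:hoffbound:explicit} is already expressed through $Y_k - Y_{k-1}$.

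From there, the first identity contributes $\|Y_k - Y_{k-1}\|$ directly; the second yields $\|[\mu_k e^\top + f\nu_k^\top - C]_+\| \leq \stepsize^{-1}\|Y_k - Y_{k-1}\|$ via monotonicity of $[\cdot]_+$ and the pointwise estimate $|[v]_+| \leq |v|$; and the third gives $|\InP{C}{X_k} - p^\top \mu_k - q^\top \nu_k| \leq \stepsize^{-1}\|Y_k\|\|Y_k - Y_{k-1}\|$ by Cauchy-Schwarz. To turn $\|Y_k\|$ into a constant independent of $k$, I would appeal to Lemma~\ref{lemma:operator-bound}: picking any $Y\opt \in \mc{G}\opt$, we have $\|Y_k - Y\opt\| \leq \|Y_0 - Y\opt\|$, hence $\|Y_k\| \leq \|Y\opt\| + \|Y_0 - Y\opt\| =: M$ for every $k$.

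Putting the three bounds together in \eqref{eq:hoffbound:explicit} and using the elementary inequality $\sqrt{a^2 + b^2 + c^2} \leq a + b + c$ for nonnegative $a,b,c$, the common factor $\|Y_k - Y_{k-1}\|$ factors out, yielding $\dist((X_k, Z_k, \mu_k, \nu_k), \mc{S}\opt) \leq \theta_{\mc{S}\opt}(1 + \stepsize^{-1}(M+1))\|Y_k - Y_{k-1}\|$. Setting $\gamma := \theta_{\mc{S}\opt}(1 + \stepsize^{-1}(M+1))$ then proves the claim and matches the constant promised in Theorem~\ref{thm:drot:linear:rate}.

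The only mildly subtle point is the reduction of the Hoffman residual vector to only three contributions in \eqref{eq:hoffbound:explicit}: of the six optimality conditions in \eqref{eq:opt:cond}, the three residuals corresponding to $Z_k e = p$, $Z_k^\top f = q$, and $X_k \geq 0$ vanish identically along the DR iterates thanks to the projections inside \eqref{eq:drot:general:form}. This is already observed in the text preceding the lemma, so the step is essentially bookkeeping; the remainder is a direct chain of Cauchy-Schwarz, pointwise monotonicity of $[\cdot]_+$, and the non-expansive property captured by Lemma~\ref{lemma:operator-bound}.
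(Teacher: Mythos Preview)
Your proposal is correct and follows essentially the same argument as the paper: both combine the index-shifted identities of Lemma~\ref{lemma:var:repr} with the Hoffman bound \eqref{eq:hoffbound:explicit}, bound the three surviving residuals via Cauchy--Schwarz and monotonicity of $[\cdot]_+$, and use the Fej\'er monotonicity from Lemma~\ref{lemma:operator-bound} to get a uniform bound $\|Y_k\|\leq M$, arriving at the identical constant $\gamma = \theta_{\mc{S}\opt}(1+\stepsize^{-1}(M+1))$.
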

\begin{proof}
Let $Y\opt \in \mc{G}\opt$, then Lemma~\ref{lemma:operator-bound} asserts that
\begin{align*}
    \norm{Y_k - Y\opt} \leq \norm{Y_{k-1} - Y\opt} \leq \cdots \leq \norm{Y_0 - Y\opt},
\end{align*}
which implies that $\Vert Y_k \Vert \leq M$  for some positive constant $M$. In fact, $\{Y_k\}$ belongs to a ball centered at $Y\opt$ with the radius $\norm{Y_0 - Y\opt}$.
From the bound of \eqref{eq:hoffbound:explicit} and Lemma~\ref{lemma:var:repr}, we have
\begin{align*}
    \dist((X_{k}, Z_k, \mu_k, \nu_k), \mc{S}\opt) \leq& \theta_{\mc{S}\opt} \left\Vert
    \begin{array}{c}
        Z_k-X_k   \\
        \InP{C}{X_k}- p^\top \mu_k - q^\top \nu_k \\
        \big[\mu_k e^\top + f \nu_k^\top-C\big]_+ 
    \end{array} \right\Vert\\
    \leq& \theta_{\mc{S}\opt} (\norm{Z_k-X_k}+   \vert \InP{C}{X_k}- p^\top \mu_k - q^\top \nu_k \vert \\
        &+ \Vert{\big[\mu_k e^\top + f \nu_k^\top-C\big]_+} \Vert) \\
    \leq& \theta_{\mc{S}\opt} (\norm{Y_k - Y_{k-1}} +  \stepsize^{-1} \vert \InP{Y_k}{Y_k-Y_{k-1}} \vert +\stepsize^{-1} \norm{[ Y_k - Y_{k-1}]_+}) \\
    \leq& \theta_{\mc{S}\opt}(1+\stepsize^{-1}+\stepsize^{-1} \norm{Y_k} ) \norm{ Y_k - Y_{k-1} } \\
    \leq& \theta_{\mc{S}\opt}(1+\stepsize^{-1}(M+1)) \norm{Y_k - Y_{k-1}}.
\end{align*}
By letting  $\gamma = \theta_{\mc{S}\opt}(1+\stepsize^{-1}(M+1))$ we obtain the desired result.
\end{proof}

Recall that for DROT, $\prox{\stepsize f}{\cdot } = \proj{\R_+^{m \times n}}{\cdot-\stepsize C}$ and $\prox{\stepsize g}{\cdot } = \proj{\mc{X}}{\cdot}$.  The following Lemma bridges the optimality conditions stated in \eqref{eq:opt:cond} with $\mc{G}\opt$.
\begin{lemma}\label{lemma:opt-rel}
    If $Y\opt \!=\! X\opt + \stepsize ( \mu\opt e^\top + f {\nu\opt}^\top)$, where $X\opt$, $\mu\opt$, and $\nu\opt$ satisfy \eqref{eq:opt:cond}, then $G(Y\opt) = 0$.
\end{lemma}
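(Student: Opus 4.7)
The plan is to verify $T(Y\opt) = Y\opt$ directly by substituting the given form of $Y\opt$ into each proximal operator appearing in the definition of $T$. Since $G(Y\opt) = Y\opt - T(Y\opt)$ and
$T(Y\opt) = Y\opt + \prox{\stepsize g}{2\prox{\stepsize f}{Y\opt}-Y\opt} - \prox{\stepsize f}{Y\opt}$, with $\prox{\stepsize f}{Y} = [Y-\stepsize C]_+$ and $\prox{\stepsize g}{Y} = \proj{\mc{X}}{Y}$ for DROT, it suffices to establish the two auxiliary identities $\prox{\stepsize f}{Y\opt} = X\opt$ and $\proj{\mc{X}}{2X\opt - Y\opt} = X\opt$. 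Together they immediately give $T(Y\opt) = Y\opt + X\opt - X\opt = Y\opt$.

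For the first identity I would rewrite $Y\opt - \stepsize C = X\opt - \stepsize D$ with $D := C - \mu\opt e^\top - f{\nu\opt}^\top$. Dual feasibility \eqref{eq:opt:cond:dual:feas} gives $D \geq 0$, and combining the primal feasibility \eqref{eq:opt:cond:p}--\eqref{eq:opt:cond:q} with the zero-duality-gap condition \eqref{eq:opt:cond:strong:duality} yields complementary slackness in the form $\InP{X\opt}{D} = 0$. Since both $X\opt \geq 0$ and $D \geq 0$, this forces $X\opt_{ij} D_{ij} = 0$ entrywise. A short case analysis then shows $[X\opt_{ij} - \stepsize D_{ij}]_+ = X\opt_{ij}$: if $X\opt_{ij} > 0$ then $D_{ij}=0$ and the clipping is inactive; if $X\opt_{ij}=0$ then the argument is $-\stepsize D_{ij}\leq 0$ and the clip returns $0$. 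Hence $\prox{\stepsize f}{Y\opt} = X\opt$.

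For the second identity, $2X\opt - Y\opt = X\opt - \stepsize(\mu\opt e^\top + f{\nu\opt}^\top)$. The correction $\stepsize(\mu\opt e^\top + f{\nu\opt}^\top)$ lies in $\mathrm{ran}\,\mc{A}^*$, which is the orthogonal complement of $\ker \mc{A}$. Since $\mc{X}$ is the affine subspace $X\opt + \ker \mc{A}$ and $X\opt\in\mc{X}$, shifting $X\opt$ by a vector in $(\ker\mc{A})^\perp$ leaves the orthogonal projection onto $\mc{X}$ unchanged. Thus $\proj{\mc{X}}{2X\opt - Y\opt} = X\opt$.

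I do not anticipate a real obstacle: both steps follow from the KKT structure of \eqref{eq:opt:cond} and the standard geometric fact that $\mathrm{ran}\,\mc{A}^* = (\ker\mc{A})^\perp$. The only minor care point is extracting the complementary slackness relation $\InP{X\opt}{D}=0$ from the strong-duality condition, which uses the marginal constraints on $X\opt$ to rewrite $p^\top\mu\opt + q^\top\nu\opt$ as $\InP{X\opt}{\mu\opt e^\top + f{\nu\opt}^\top}$.
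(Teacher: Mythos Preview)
Your proposal is correct and follows essentially the same approach as the paper: both proofs establish $G(Y\opt)=0$ by verifying the two auxiliary identities $\prox{\stepsize f}{Y\opt}=X\opt$ and $\proj{\mc{X}}{2X\opt-Y\opt}=X\opt$ from the KKT conditions. The only cosmetic difference is that the paper phrases the first identity via the normal-cone inclusion $\mu\opt e^\top + f{\nu\opt}^\top - C \in N_{\R_+^{m\times n}}(X\opt)$ and the second via the projection optimality condition, whereas you carry out the equivalent entrywise case analysis and the $(\ker\mc{A})^\perp=\mathrm{ran}\,\mc{A}^*$ argument directly.
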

\begin{proof}
We need to prove that
\begin{align*}
    G(Y\opt) &= \proj{\R_+^{m \times n}}{Y\opt-\stepsize C} - \proj{\mc{X}}{2 \proj{\R_+^{m \times n}}{Y\opt-\stepsize C}-Y\opt}
\end{align*}
is equal to zero. Let us start with simplifying $\proj{\R_+^{m \times n}}{Y\opt-\stepsize C}$. By complementary slackness,
\begin{align*}
\langle X\opt, \mu\opt e^\top + f {\nu\opt}^\top -  C \rangle  =0,
\end{align*}
and by dual feasibility
\begin{align*}
\mu\opt e^\top + f {\nu\opt}^\top - C \leq 0.
\end{align*}
Consequentially, we have:
\begin{align*}
\langle \mu\opt e^\top + f {\nu\opt}^\top - C, X - X\opt\rangle \leq 0, \quad \forall X \in \R_+^{m \times n},
\end{align*}
which defines a normal cone of $\R_+^{m \times n}$, i.e. $\mu\opt e^\top + f {\nu\opt}^\top - C \in N_{\R_+^{m \times n}}(X\opt)$. By using the definition of $Y^\star$, and that $\stepsize > 0$, this implies that
\begin{align*}
 Y\opt - X\opt -\stepsize C &\in N_{\R_+^{m \times n}}(X\opt),
\end{align*}
which corresponds to optimality condition of the projection
\begin{align*}
   X\opt =  \proj{\R_+^{m \times n}}{Y\opt-\stepsize C}. 
\end{align*}
We thus have $\proj{\mc{X}}{2 \proj{\R_+^{m \times n}}{Y\opt-\stepsize C}-Y\opt} =  \proj{\mc{X}}{2 X\opt-Y\opt}$. But by the definition of $Y\opt$,
\begin{align*}
    0 = Y\opt - X\opt - \stepsize (\mu\opt e^\top + f {\nu\opt}^\top) 
      = X\opt - (2X\opt - Y\opt) - \stepsize \mu\opt  e^\top  - \stepsize f {\nu\opt}^\top,
\end{align*}
which means that $X\opt =  \proj{\mc{X}}{2 X\opt-Y\opt}$ (following from the optimality condition of the projection). Substituting all these quantities into the formula for $G(Y\opt)$, we obtain $G(Y\opt) = 0.$
\end{proof}

\begin{lemma}\label{lemma:ancillary-bound}
    There exists a constant $c\geq 1$ such that 
    \begin{align*}
    \dist(Y_k, \mc{G}\opt) \leq c\, \Vert Y_{k}-Y_{k-1} \Vert.
    \end{align*}
\end{lemma}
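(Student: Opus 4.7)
The plan is to lift the bound on $\dist((X_k,Z_k,\mu_k,\nu_k),\mc{S}\opt)$ from Lemma~\ref{lemma:Y-hoff-bound} to a bound on $\dist(Y_k, \mc{G}\opt)$. The key observation is that the iterates carry a built-in formula relating $Y_k$ to the primal–dual triple: from the $Y$-update recorded in \eqref{eqn:yproj_update}, we have $Y_k = X_k + \stepsize(\mu_k e^\top + f \nu_k^\top)$, and Lemma~\ref{lemma:opt-rel} tells us that if we replace $(X_k,\mu_k,\nu_k)$ by any optimal triple $(X\opt,\mu\opt,\nu\opt) \in \mc{S}\opt$, the corresponding $Y\opt := X\opt + \stepsize(\mu\opt e^\top + f{\nu\opt}^\top)$ lands in $\mc{G}\opt$. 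These two facts together give us a candidate $Y\opt \in \mc{G}\opt$ whose distance to $Y_k$ we can control.

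Concretely, I would first let $(X\opt, Z\opt, \mu\opt, \nu\opt) = \proj{\mc{S}\opt}{(X_k,Z_k,\mu_k,\nu_k)}$ be the nearest optimal point, so that by Lemma~\ref{lemma:Y-hoff-bound},
\begin{align*}
    \|X_k - X\opt\|^2 + \|\mu_k - \mu\opt\|^2 + \|\nu_k - \nu\opt\|^2 \;\leq\; \gamma^2 \|Y_k - Y_{k-1}\|^2.
\end{align*}
Then define $Y\opt := X\opt + \stepsize(\mu\opt e^\top + f{\nu\opt}^\top) \in \mc{G}\opt$. Using the identity $Y_k = X_k + \stepsize(\mu_k e^\top + f \nu_k^\top)$ and subtracting,
\begin{align*}
    Y_k - Y\opt = (X_k - X\opt) + \stepsize\,(\mu_k - \mu\opt) e^\top + \stepsize\, f (\nu_k - \nu\opt)^\top.
\end{align*}

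Applying the triangle inequality and the submultiplicativity of the Frobenius norm on the rank-one terms yields $\|(\mu_k - \mu\opt) e^\top\| \leq \|\mu_k - \mu\opt\|\,\|e\|$ and likewise for the $f$-term, so
\begin{align*}
    \|Y_k - Y\opt\| \;\leq\; \bigl(1 + \stepsize(\|e\| + \|f\|)\bigr)\, \dist\bigl((X_k,Z_k,\mu_k,\nu_k),\mc{S}\opt\bigr) \;\leq\; \gamma\bigl(1 + \stepsize(\|e\|+\|f\|)\bigr)\,\|Y_k - Y_{k-1}\|.
\end{align*}
Since $\dist(Y_k,\mc{G}\opt) \leq \|Y_k - Y\opt\|$, setting $c = \gamma(1+\stepsize(\|e\|+\|f\|))$ delivers the stated inequality, consistent with the constant used in Theorem~\ref{thm:drot:linear:rate}; if needed, $c$ can be enlarged to at least $1$.

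The only non-routine step is matching the algebraic identity $Y_k = X_k + \stepsize(\mu_k e^\top + f\nu_k^\top)$ (which holds by construction of $\mu_k,\nu_k$ via $\phi_k/\stepsize,\varphi_k/\stepsize$) with the optimality characterization in Lemma~\ref{lemma:opt-rel}; once these are aligned, the bound is a short triangle-inequality argument. I do not expect a genuine obstacle beyond being careful that the Hoffman bound in Lemma~\ref{lemma:Y-hoff-bound} controls exactly the three components $(X_k-X\opt, \mu_k-\mu\opt, \nu_k-\nu\opt)$ that appear when reconstructing $Y_k - Y\opt$.
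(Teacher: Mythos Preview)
Your proposal is correct and follows essentially the same route as the paper: project $(X_k,Z_k,\mu_k,\nu_k)$ onto $\mc{S}\opt$, use Lemma~\ref{lemma:opt-rel} to certify that $Y\opt = X\opt + \stepsize(\mu\opt e^\top + f{\nu\opt}^\top)$ lies in $\mc{G}\opt$, subtract using the identity $Y_k = X_k + \stepsize(\mu_k e^\top + f\nu_k^\top)$, and bound via the triangle inequality and Lemma~\ref{lemma:Y-hoff-bound}. The paper additionally notes that $c\geq 1$ follows from \eqref{coro:y:a} in Corollary~\ref{coro:y-inequalities} rather than by enlarging, but the argument is otherwise the same.
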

\begin{proof}
    Let $Y\opt = X\opt + \stepsize\mu\opt e^\top + \stepsize f{\nu\opt}^\top$, where $(X\opt, X\opt, \mu\opt, \nu\opt) = \proj{\mc{S}\opt  }{(X_k, Z_k, \mu_k, \nu_k)}$. According to Lemma~\ref{lemma:opt-rel}, we have that $ Y\opt\in G\opt$. Then
    \begin{align*}
    \dist(Y_k, \mc{G}\opt) \leq \Vert Y_k - Y\opt  \Vert
    =\Vert Y_k - X\opt - \stepsize( \mu\opt e^\top + f{\nu\opt}^\top)\Vert.
    \end{align*}
    Since $Y_{k} = X_{k}+\stepsize \mu_{k} e^\top + \stepsize f \nu_{k}^\top$, it holds that
    \begin{align*}
        \dist(Y_k, \mc{G}\opt) &\leq \Vert X_{k}-X\opt  +\stepsize( \mu_{k} - \mu\opt)e^\top + \stepsize f( \nu_{k}- \nu\opt)^\top \Vert \\
        &\leq \Vert X_{k}-X\opt \Vert + \rho \Vert e \Vert \Vert \mu_{k} - \mu\opt \Vert 
        + \rho \Vert f \Vert \Vert \nu_{k} - \nu\opt \Vert \\
        &\leq
        \sqrt{1 +  \rho^2 \Vert e \Vert^2 +  \rho^2 \Vert f \Vert^2}
        \sqrt{\Vert X_{k} - X\opt \Vert^2 + \Vert \mu_{k} - \mu\opt \Vert^2 + \Vert \nu_{k} - \nu\opt \Vert^2}\\
        &\leq\sqrt{1 +  \rho^2 \Vert e \Vert^2 +  \rho^2 \Vert f \Vert^2}
        \sqrt{\norm{(X_k - X\opt, Z_k - X\opt, \mu_k - \mu\opt, \nu_k - \nu\opt)}^2}\\
        &= \sqrt{1 +  \rho^2 \Vert e \Vert^2 +  \rho^2 \Vert f \Vert^2}
        	\dist((X_k, Z_k, \mu_k, \nu_k), \mc{S}\opt)\\
        &\leq \gamma(1+\stepsize(\Vert e \Vert + \Vert f \Vert))\Vert Y_{k} - Y_{k-1} \Vert,
    \end{align*}
    where the last inequality follows from Lemma~\ref{lemma:Y-hoff-bound}. The fact that 
    \begin{align}
        c = \gamma(1+\stepsize(\Vert e \Vert + \Vert f \Vert)) \geq 1 
    \end{align}
    follows from \eqref{coro:y:a} in Corollary~\ref{coro:y-inequalities}.
\end{proof}


\subsection{Proof of linear convergence}
By combining Lemma~\ref{lemma:ancillary-bound} with \eqref{coro:y:b} in Corollary~\ref{coro:y-inequalities}, we obtain:
\begin{align*}
    (\dist(Y_{k+1}, \mc{G}\opt))^2 &\leq \frac{c^2}{1+c^2}(\dist(Y_{k}, \mc{G}\opt))^2
\end{align*}
or equivalently 
\begin{align*}
    \dist(Y_{k+1}, \mc{G}\opt)&\leq \frac{c}{\sqrt{1+c^2}}\dist(Y_{k}, \mc{G}\opt).
\end{align*}
Letting $r=c/\sqrt{1+c^2}<1$ and iterating the inequality $k$ times gives
\begin{align*}
    \dist(Y_{k}, \mc{G}\opt) \leq \dist(Y_{0}, \mc{G}\opt) r^k. 
\end{align*}
Let $Y\opt := \proj{\mc{G}\opt}{Y_k}$. Since $\prox{\stepsize f} {Y\opt}\in \mc{X}\opt$ and $X_{k+1}=\prox{\stepsize f} {Y_{k}}$, we have
\begin{align*}
    \dist(X_{k+1}, \mc{X}\opt) 
    \leq \norm{\prox{\stepsize f} {Y_{k}} - \prox{\stepsize f} {Y\opt}}
    \leq \norm{Y_k - Y\opt}
    = \dist(Y_{k}, \mc{G}\opt) \leq  \dist(Y_{0}, \mc{G}\opt) r^k,
\end{align*}
where the second inequality follows from the non-expansiveness of $\prox{\stepsize f} {\cdot}$
This completes the proof of Theorem~\ref{thm:drot:linear:rate}.

\section{DR and its connection to ADMM}\label{appendix:dr:admm}
The Douglas-Rachford updates given in \eqref{eq:dr:def} can equivalently be formulated as:
\begin{equation*}
\begin{aligned}
    x_{k+1} &= \prox{\stepsize f}{y_k} \\
    z_{k+1} &= \prox{\stepsize g}{2x_{k+1}+y_k} \\
    y_{k+1} &= y_k + z_{k+1} -x_{k+1} \\
    &\iff \\
    z_{k+1} &= \prox{\stepsize g}{2x_{k+1}+y_k} \\
    y_{k+1} &= y_k + z_{k+1} -x_{k+1} \\
    x_{k+2} &= \prox{\stepsize f}{y_{k+1}} \\
    & \iff \\
    z_{k+1} &= \prox{\stepsize g}{2x_{k+1}+y_k} \\
    x_{k+2} &= \prox{\stepsize f}{y_k + z_{k+1} -x_{k+1}} \\
    y_{k+1} &= y_k + z_{k+1} -x_{k+1}. 
\end{aligned}
\end{equation*}
If we let $w_{k+1} = \frac{1}{\stepsize}(y_k - x_{k+1})$, we get.
\begin{equation*}
\begin{aligned}
    z_{k+1} &= \prox{\stepsize g}{x_{k+1}+\stepsize w_{k+1}} \\
    x_{k+2} &= \prox{\stepsize f}{z_{k+1}-\stepsize w_{k+1}} \\
    w_{k+2} &= w_{k+1} + \frac{1}{\stepsize}(z_{k+1}-x_{k+2}). 
\end{aligned}
\end{equation*}
By re-indexing the $x$ and the $w$ variables, we get:
\begin{equation*}\label{eq:admm}
\begin{aligned}
    z_{k+1} &= \prox{\stepsize g}{x_{k}+\stepsize w_{k}} \\
    x_{k+1} &= \prox{\stepsize f}{z_{k+1}-\stepsize w_{k}} \\
    w_{k+1} &= w_{k} + \frac{1}{\stepsize}(z_{k+1}-x_{k+1}). 
\end{aligned}
\end{equation*}
This is exactly ADMM applied to the composite convex problem on the form \eqref{eq:composite:problem}.

\section{Fenchel-Rockafellar duality of OT}\label{appendix:Fenchel-Rockafellar:duality}
Recall that
\begin{align*}
    f(X) =  \InP{C}{X} +  \indcfunc{\R^{m\times n}_+}{X}
    \quad \mbox{and} \quad
    g(X) = \indcfunc{\{Y: \mc{A}(Y) = b\}}{X} = \indcfunc{\mc{X}}{X}.
\end{align*}
Recall also that the conjugate of the indicator function $\mathsf{I}_{\mc{C}}$ of a closed and convex set $\mc{C}$ is the support function $\sigma_{\mc{C}}$ of the same set \cite[Chapter~4]{Bec17}. For a non-empty affine set $\mc{X} = \{Y: \mc{A}(Y) = b\}$, its support function can be computed as:
\begin{align*}
    \sigma_{\mc{X}} (U) = \InP{U}{X_0} + \indcfunc{\mathrm{ran}\mc{A}^*}{U},
\end{align*}
where $X_0$ is a point in $\mc{X}$ \citep[Example 2.30]{Bec17}. By letting $X_0=pq^\top \in \mc{X}$, it follows that 
\begin{align*}
    g^*(U) = \langle{U},{pq^\top}\rangle + \indcfunc{\mathrm{ran}\mc{A}^*}{U}.
\end{align*}
Since
$ \mc{A}^* :   \R^{m+n}  \to \R^{m\times n}:
    (y, x)
    \mapsto
    y e^\top + f x^\top$,  any matrix $U \in \mathrm{ran}\mc{A}^*$ must have the form
$\mu e^\top + f \nu^\top$ for some $\mu \in \R^m$ and $\nu \in \R^n$. The function $g^*(U)$ can thus be evaluated as:
\begin{align*}
    g^*(U) = p^\top \mu + q^\top \nu.
\end{align*}
We also have
\begin{align*}
    f^*(-U) &= \max_{X} \{\InP{-U}{X} -  \InP{C}{X} -  \indcfunc{\R^{m\times n}_+}{X}\}
    \nonumber\\
            &= \max_{X} \{\InP{-U - C}{X} -  \indcfunc{\R^{m\times n}_+}{X}\}
            \nonumber\\
            &= \sigma_{\R^{m\times n}_+}(-U - C)
            \nonumber\\
            &=
            \left\{
                \begin{array}{lr}
                    0  & \mbox{if} \,\,\, - U - C \leq 0,\\
                    +\infty & \mbox{otherwise}.
                \end{array}
            \right.
\end{align*}
The Fenchel–Rockafellar dual of problem~\ref{eq:composite:problem} in Lemma~\ref{lem:dr:convergence} thus becomes
\begin{align*}
    \begin{array}{ll}
        \underset{\mu\in \R^m, \nu\in \R^n}{\maximize}  & - p^\top \mu - q^\top \nu \\
        \mbox{subject to} & -\mu e^\top - f \nu^\top - C \leq 0,
    \end{array}
\end{align*}
which is identical to \eqref{eq:ot:dist:dual} upon replacing $\mu$ by $-\mu$ and $\nu$ by $-\nu$.

\end{document}